\begin{document}

\theoremstyle{plain}
\newtheorem*{nclaim}{Claim}
\newtheorem{mythm}{Theorem}[]
\newtheorem{thm}{Theorem}[section]
\newtheorem{lem}[thm]{Lemma}
\newtheorem{cor}[thm]{Corollary}
\newtheorem{note}[thm]{Note}
\newtheorem{prop}[thm]{Proposition}
\newtheorem{fact}[thm]{Fact}
\newtheorem{claim}[thm]{Claim}
\newtheorem{subclaim}[thm]{Sublaim}
\newtheorem{conj}[thm]{Conjecture}
\newtheorem{obs}[thm]{Observation}
\newtheorem{q}[thm]{Question}
\newtheorem{mlem}[thm]{Main Lemma}
\newtheorem{notation}[thm]{Notation}
\newtheorem{defnthm}[thm]{Definition and Theorem}
\newtheorem{recall}[thm]{Recall}
\newtheorem*{nthm}{Theorem}
\newtheorem*{nq}{Question}
\newtheorem*{nmc}{Main Claim}

\theoremstyle{definition}
\newtheorem{defn}[thm]{Definition}

\theoremstyle{remark}
\newtheorem{remark}[thm]{Remark}
\newtheorem{example}[thm]{Example}

\def\RR{\mathbb{R}}
\def\ZZ{\mathbb{Z}}
\def\cN{\mathcal{N}}
\def\cM{\mathcal{M}}
\def\cK{\mathcal{K}}
\def\cE{\mathcal{E}}
\def\cI{\mathcal{I}}
\def\cJ{\mathcal{J}}
\def\cA{\mathcal{A}}
\def\cX{\mathcal{X}}
\def\cY{\mathcal{Y}}
\def\cS{\mathcal{S}}
\def\cB{\mathcal{B}}
\def\cC{\mathcal{C}}
\def\cQ{\mathcal{Q}}
\def\cL{\mathcal{L}}
\def\add#1{{\operatorname{\textup{\textsf{add}}}}(#1)}
\def\non#1{{\operatorname{\textup{\textsf{non}}}}(#1)}
\def\cov#1{{\operatorname{\textup{\textsf{cov}}}}(#1)}
\def\cof#1{{\operatorname{\textup{\textsf{cof}}}}(#1)}
\def\adds#1{{\operatorname{\textup{\textsf{add}}}}^*(#1)}
\def\nons#1{{\operatorname{\textup{\textsf{non}}}}^*(#1)}
\def\covs#1{{\operatorname{\textup{\textsf{cov}}}}^*(#1)}
\def\cofs#1{{\operatorname{\textup{\textsf{cof}}}}^*(#1)}
\def\fc{\mathfrak{c}}
\def\fb{\mathfrak{b}}
\def\fd{\mathfrak{d}}
\def\fe{\mathfrak{e}}
\def\fg{\mathfrak{g}}
\def\fh{\mathfrak{h}}
\def\fk{\mathfrak{k}}
\def\fm{\mathfrak{m}}
\def\fp{\mathfrak{p}}
\def\fr{\mathfrak{r}}
\def\fs{\mathfrak{s}}
\def\ft{\mathfrak{t}}
\def\fw{\mathfrak{w}}
\def\fN{\mathfrak{N}}
\def\fM{\mathfrak{M}}
\def\fY{\mathfrak{Y}}
\def\fT{\mathfrak{T}}
\def\fX{\mathfrak{X}}
\def\isomorphic{\simeq}
\def\cf{\operatorname{cf}}
\def\Cof{\operatorname{Cof}}
\def\op#1#2{\langle #1,#2\rangle}
\def\dom{\operatorname{dom}}
\def\ran{\operatorname{ran}}
\def\card#1{\lvert#1\rvert}
\def\size#1{\left|#1\right|}
\def\reals#1{{#1}^{#1}}
\def\set#1{\left\{{#1}\right\}}
\def\seq#1{\left\langle{#1}\right\rangle}
\def\forallbutfin{\forall^{\infty}}
\def\existinf{\exists^{\infty}}
\newcommand{\forces}[1][{}]{\Vdash_{#1}}
\newcommand{\forcestext}[2][{}]
{\Vdash_{#1}\text{``}\,{#2}\,\text{''}}
\newcommand{\tvalue}[1][{}]{{[\![#1]\!]}}
\def\V{\mathbf{V}}
\def\W{\mathbf{W}}
\def\On{\mathbf{On}}
%
%{\Large $<$}$\!\!$\raisebox{2pt}{\footnotesize $\circ$}
\def\emb{\text{\Large $<$}$\!\!$\raisebox{1.5pt}
{\footnotesize $\circ$}}
\newcommand{\modelstext}[1][{}]
{\models\text{``}\,{#1}\,\text{''}}
\def\that{\!\!\!\text{``}\,\,}
\def\taht{\,\text{''}}
\def\ZF{\operatorname{\sf ZF}}
\def\ZFC{\operatorname{\sf ZFC}}
\def\CH{\operatorname{\sf CH}}
\def\GCH{\operatorname{\sf GCH}}
\def\AC{\operatorname{\sf AC}}
\def\AD{\operatorname{\sf AD}}
\def\MA{\operatorname{\sf MA}}
\def\PFA{\operatorname{\sf PFA}}
\def\OCA{\operatorname{\sf OCA}}
\def\SH{\operatorname{\sf SH}}
\def\PID{\operatorname{\sf PID}}
\def\cD{\mathcal{D}}
\def\cF{\mathcal{F}}
\def\cG{\mathcal{G}}
\def\cH{\mathcal{H}}
\def\cP{\mathcal{P}}
\def\cT{\mathcal{T}}
\def\AA{\mathbb{A}}
\def\AAS{\mathbb{AS}}
\def\BB{\mathbb{B}}
\def\CC{\mathbb{C}}
\def\DD{\mathbb{D}}
\def\EE{\mathbb{E}}
\def\FF{\mathbb{F}}
\def\GG{\mathbb{G}}
\def\II{\mathbb{I}}
\def\LL{\mathbb{L}}
\def\MM{\mathbb{M}}
\def\MMI{\mathbb{MI}}
\def\NN{\mathbb{N}}
\def\OO{\mathbb{O}}
\def\PP{\mathbb{P}}
\def\QQ{\mathbb{Q}}
\def\SS{\mathbb{S}}
\def\TT{\mathbb{T}}
\def\XX{\mathbb{X}}
\def\cSN{\mathcal{SN}}
\def\ind#1{{\operatorname{\rm ind}}(#1)}
\def\cO{\mathcal{O}}
\def\cU{\mathcal{U}}
\def\sfX{\textup{\textsf{X}}}
\def\d{\operatorname{\rm d}}
\def\sfP{\mathsf{P}}
\def\sfY{\mathsf{Y}}
\def\sfD{\mathsf{D}}
\def\sfE{\mathsf{E}}
\def\sfC{\mathsf{C}}
\def\sfF{\mathsf{F}}
\def\sfg{\mathsf{g}}
\def\sfa{\mathsf{a}}
\def\sfb{\mathsf{b}}
\def\sfc{\mathsf{c}}
\def\sfh{\mathsf{h}}
\def\sfi{\mathsf{i}}
\def\sfl{\mathsf{l}}
\def\Fn{\operatorname{{\rm Fn}}}
\def\sfsi{\textup{\textsf{si}}}
\def\sfSI{\textup{\textsf{SI}}}
\def\Gap{\textup{\textsf{Gap}}}
\def\stem{\textup{\textsf{stem}}}
\def\Split{\textup{\textsf{Split}}}
\def\split
{\operatorname{\textup{\textsf{split}}}}
\def\Succ{\textup{\textsf{Succ}}}\def\split
{\operatorname{\textup{\textsf{split}}}}
\def\succ
{\operatorname{\textup{\textsf{succ}}}}
\def\fsc{\mathfrak{sc}}
\def\conc{\,{}^\frown}
\def\blackbox{\rule{8pt}{8pt}}
\def\pend{\dashv}
\def\ot{\rm ot}
\def\wcondition{{\bf1}}
\def\Kunen{{\sf Kunen}}
\def\Laver{{\sf Laver}}
\def\FK{{\sf Freezing}_{\sf Kunen}}
\def\FW{{\sf Freezing}_{\sf Woodin}}
\def\Pregap{\textup{\textsf{Pregap}}}
\def\Lim{\textup{\textsf{Lim}}}
\def\supp{\mathrm{supp}}
\def\loc{\mathbb{LOC}}

%%%restrictedto%%%
\newcommand{\sprestT}{\upharpoonright}
\newcommand{\sprestS}{\upharpoonright}
\newcommand{\restrictedto}{
{\mathchoice{\sprestT}{\sprestT}
{\sprestS}{\sprestS}}}
%%%restrictedto%%%

\def\sprest{\!\downarrow\!}
\def\lAngle{\langle\!\langle} % nath.sty
\def\rAngle{\rangle\!\rangle} % nath.sty

\def\lv{\mathsf{lv}}
\def\height{\mathsf{ht}}
\def\rec{\mathrm{rec}}
\def\arec{\mathrm{arec}}
\def\pK{\text{\rm property K}}
\def\aR{{\mathrm a}\mathsf{R}_{1,\aleph_1}}
\def\R{\mathsf{R}_{1,\aleph_1}}
\def\FSCO{\mathsf{FSCO}}
\def\TA{\mathsf{TA}}
\def\supp{\mathrm{supp}}

\def\bfsc{\sc}
\def\itsc{\sc}
\def\rmsc{\sc}

\renewcommand{\thefootnote}{{\rm $*$\arabic{footnote}}}

\pagestyle{plain}

\title[]{Some infinitely generated non-projective modules over path algebras and their extensions under Martin's Axiom}

\author{Ayako Itaba}
\address{Department of Mathematics, 
Tokyo University of Science, 
1-3 Kagurazaka, Shinjuku-ku, Tokyo 162-8601, JAPAN.}
\email{itaba@rs.tus.ac.jp}

\author{Diego A. Mej\'ia}
\address{Faculty of Science, Shizuoka University, 
Ohya 836, Shizuoka, 422-8529, JAPAN.}
\email{diego.mejia@shizuoka.ac.jp}

\author{Teruyuki Yorioka}
\address{Faculty of Science, Shizuoka University, 
Ohya 836, Shizuoka, 422-8529, JAPAN.}
\email{yorioka@shizuoka.ac.jp}

%\thanks{%Supported by 
%%Grant-in-Aid for 
%%%Young Scientists (B) 25800089,
%%Japan Society for the Promotion of Science.
%%\\
%The authors thank Hiroyuki Minamoto, Izuru Mori 
%and Kenta Ueyama
%for useful comments about this research.
%Especially, they provided us advice and information about 
%{\bfsc Theorem \ref{fdA}},
%{\itsc Remark} {\rmsc \ref{Noether ring}}
%and
%{\bfsc Proposition \ref{closed quiver}}.
%}

\subjclass[2010]{16G10, 16G20, 03E35, 03E50}

\keywords{path algebras, quiver representations,
non-projective modules,
Martin's Axiom
%\\
%\hfill 
%{\tt file name: {\jobname}.tex}
}

\begin{abstract}
In this paper
it is proved that,
when $Q$ is a quiver that admits some closure,
for any algebraically closed field $K$
and 
any finite dimensional $K$-linear representation $\cX$ of $Q$,
if ${\rm Ext}^1_{KQ}(\cX,KQ)=0$
then $\cX$ is projective
({\bfsc Theorem \ref{fqa cor}}).
In contrast,
we show that
if $Q$ is a specific quiver of the type above,
then there is an infinitely generated non-projective $KQ$-module 
$M_{\omega_1}$
such that,
when $K$ is a countable field,
$\MA_{\aleph_1}$ 
(Martin's Axiom for $\aleph_1$ many dense sets,
which is a combinatorial axiom
in set theory)
implies that 
${\rm Ext}^1_{KQ}(M_{\omega_1},KQ)=0$
({\bfsc Theorem \ref{cor final}}).
\end{abstract}

\maketitle

\section*{Introduction}

Bound quiver algebras of finite connected quivers
strongly influence research on representation theory of Artin algebras.
Gabriel found 
a correspondence between 
finite dimensional algebras
and
linear representations of bound quivers 
(\cite{G}, \cite[II]{ASS}),
so
it follows that studying modules of finite dimensional algebras 
is reduced to studying modules of bound quiver algebras. 
In this paper,
we concentrate 
on the study of path algebras,
which is one type of bound quiver algebras.

Nakayama Conjecture, Tachikawa Conjecture,
and Auslander-Reiten Conjecture
are some major research projects
in ring theory that 
present sufficient conditions
for projective modules.
Related to this, 
it has been known the following result
for Artin algebras:
$(*)$
{\em For any finite dimensional algebra\footnote{Any finite dimensional algebra is Artin.}
 $\Lambda$
over an algebraically closed field 
of finite global dimension
and
any finitely generated $\Lambda$-module $M$,
if 
${\rm Ext}^{\geq 1}_\Lambda(M,\Lambda)=0$,
then
$M$ is projective}
({\bfsc Theorem \ref{fdA}}).
A typical example of 
finite dimensional 
%Artin 
algebras
is a path algebra
of a finite acyclic quiver
over an algebraically closed field.
Since
any path algebra 
of a quiver over an algebraically closed field 
is hereditary 
(even when the quiver  is not finite, 
see e.g. \cite[\S8.2]{GR}), 
that is,
its global dimension is not larger than $1$,
the following assertion also holds:
{\em 
For any algebraically closed field $K$,
any finite acyclic quiver $Q$
and 
any finitely generated $KQ$-module $M$,
if 
${\rm Ext}^{1}_{KQ}(M,KQ)=0$,
then
$M$ is projective.}
In {\bfsc Theorem \ref{fqa cor}},
it is shown that 
the above assertion is also true
for finite dimensional $K$-linear representations of 
{\em some} infinite quivers,
one of which is
the following quiver of $A_\infty$ type,
denoted by $A_\infty^{\leftarrow}$:
\[
\xymatrix@C15pt{
0
& 1 \ar[l] 
& 2 \ar[l] 
& \cdots \ar[l]
& n \ar[l]
& n+1 \ar[l]
& \cdots \ar[l]
}
.
\]

Let $A_\infty^{\rightarrow}$ be the opposite quiver 
of $A_\infty^{\leftarrow}$: 
the set of the vertices of $A_\infty^{\rightarrow}$ 
is the same as $A_\infty^{\leftarrow}$,
but the arrows are reversed, that is, 
each arrow in $A_\infty^{\rightarrow}$ is of the form
$\xymatrix@C15pt{n \ar[r] & n+1}$.
The category ${\rm Mod} K A_\infty^{\rightarrow}$ of 
$K A_\infty^{\rightarrow}$-modules 
%has been studied in 
%\cite{Brune:leftright}.
%This category 
is known to be somewhat simple, actually
is pure semisimple, that is, 
every $K A_\infty^{\rightarrow}$-module is a direct sum
of finitely presented $K A_\infty^{\rightarrow}$-modules
\cite[\S2]{Brune:leftright}.
In \cite[Theorem 3.1]{Enochsetc:projrep}, 
a characterization of 
projective representations of $A_\infty^{\rightarrow}$
over a unital ring
is given.
The category of representations of 
$A_\infty^{\leftarrow}$ also has been studied, 
for example, 
a characterization of 
projective representations of $A_\infty^{\leftarrow}$
over a field is presented in 
\cite[p102 {\sc Example}]{Benson:Book},
and 
this is extended to
such representations over a unital ring
in \cite[\S3]{Enochsetc:flatcovers}.

In this paper,
we consider some specific quivers $Q$,
as specified in {\bfsc Theorem \ref{cor final}},
one of which is the quiver $A_\infty^{\leftarrow}$
to
construct an infinitely generated 
{\em non-projective} $KQ$-module, 
which is denoted by
$M_{\omega_1}$.
To analyze such a $KQ$-module $M_{\omega_1}$,
$\MA_{\aleph_1}$ 
(Martin's Axiom for $\aleph_1$ many dense sets)
is used.
$\MA_{\aleph_1}$ is 
a combinatorial axiom of set theory
that cannot neither be proved nor refuted from 
Zermelo-Fraenkel axiomatic set theory {\sf ZFC} 
with the axiom of choice
\cite{MartinSolovay, SolovayTennenbaum: iteration}.
$\MA_{\aleph_1}$ 
is applied in many areas of mathematics
to show that some mathematical statements cannot be refuted 
from {\sf ZFC}
(see e.g. \cite{Fremlin}).
One of such examples is Shelah's solution of
Whitehead Problem
\cite{Shelah:W}.
Our main result states that
{\em
if $K$ is a countable field 
and
$\MA_{\aleph_1}$ holds,
then 
${\rm Ext}^1_{KQ}(M_{\omega_1},KQ)=0$}
({\bfsc Theorems \ref{inf simple thm}},
{\bfsc \ref{2 thm}}
%{\bfsc \ref{fin ext}}
and 
{\bfsc Theorem \ref{cor final}}).
Therefore, 
{\em
under $\MA_{\aleph_1}$
and the assumption that 
$K$ is a countable field,
the above assertion $(*)$ fails for quivers $Q$ 
as in {\bfsc Theorem \ref{cor final}} 
and 
infinitely generated $KQ$-modules}.
Trlifaj's construction is used 
to build such infinitely generated $KQ$-modules,
which will be presented in \S\ref{ods}.

This paper is intended to be fairly self contained, 
but we will assume some basic knowledge about ordinals
(see e.g. \cite[II.1, II.4]{EM} and \cite[I.7, III.6]{Kunen:new}).
\S\ref{prel} provides necessary knowledge,
which includes some facts on path algebras
and 
set theory.
\S \ref{inf gen pa} provides the proof of 
the main result of this paper.

%%%%%%%%%%%%%%%%%%%%
\section{Preliminaries}
\label{prel}

Throughout this paper,
a ring $R$ means a ring with enough idempotents
(hence $R$ may not be unital),
and
an $R$-module means right $R$-module.
For a ring $R$, 
${\rm Mod}R$ denotes 
the category of the $R$-modules,
and
${\rm mod}R$ denotes the category of
the finitely generated $R$-modules.
For an $R$-module $M$ and a subset $X$ of $M$,
$\seq{X}_R$ denotes the $R$-submodule of the module $M$
generated by $X$.
For an $R$-module $M$ and $R$-submodules $N_i$, $i\in I$, of $M$,
$\displaystyle \sum_{i\in I} N_i$
denotes the $R$-submodule that is the $R$-linear span of
the set $\displaystyle \bigcup_{i\in I} N_i$.

We follow the notation of outer direct sums
in \cite[I.2.]{EM}.
For a family $\set{M_i:i\in I}$ of modules,
the {\rm product module}
$\displaystyle \prod_{i\in I} M_i$ is the module
whose underling set is the set of
functions $f$ with domain $I$ such that
for each $i\in I$, 
$f(i)$ belongs to the set $M_i$,
and 
the operations are defined coordinate-wise.
For a member $f$ of the product $\displaystyle \prod_{i\in I} M_i$,
the {\em support} ${\rm supp}(f)$ of $f$ is defined by the set
\[
\set{i\in I: f(i)\neq 0_{M_i}}
.
\]
The outer direct sum
$\displaystyle \bigoplus_{i\in I} M_i$
of a family $\set{M_i:i\in I}$ of modules
is the submodule of the product
module $\displaystyle \prod_{i\in I} M_i$ 
which
consists of the members of the set
$\displaystyle \prod_{i\in I} M_i$ 
whose supports are finite.

We adopt ordinals as the von Neumann ordinals,
that is, 
an ordinal $\alpha$ 
means the set of ordinals less than $\alpha$.
So for ordinals $\alpha$ and $\beta$,
$\alpha$ is less than $\beta$ iff $\alpha\in\beta$.
$\omega$ is the set of all finite ordinals
(non-negative integers),
$\omega_1$ is the least uncountable ordinal 
(which is a cardinal).
$\Lim$ denotes the class of all limit ordinals.

The following is a well-known equivalence about projectivity.

\begin{thm}[\text{\rm E.g. \cite[Propositions 17.1., 17.2]{AF}}]
For a ring $R$ (with enough idempotents)
and an $R$-module $P$,
the following statements are equivalent.
\begin{enumerate}[{\rm (1)}]
\item
For every $R$-epimorphism $f$ from an $R$-module $M$ 
onto an $R$-module $N$ and 
$R$-homomorphism $g$ from $P$ into $N$, 
there exists an $R$-homomorphism $h$ from $P$ into $M$
such that
$g=f\circ h$.

\item
Every $R$-epimorphism from an $R$-module onto $P$
splits, 
that is, it is right invertible.

\item
The functor ${\rm Hom}_R(P, -)$ within the category ${\rm Mod}R$
is exact,
that is,
for every $R$-module $M$,
${\rm Ext}^1_R (P,M)=0$.

\item
$P$ is isomorphic to a direct summand of a free $R$-module.

\end{enumerate}
\end{thm}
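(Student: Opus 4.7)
The plan is to establish the cycle $(1) \Rightarrow (2) \Rightarrow (4) \Rightarrow (1)$ and handle $(1) \Leftrightarrow (3)$ separately through the long exact sequence defining $\mathrm{Ext}^1$. For $(1) \Rightarrow (2)$, given an epimorphism $f\colon M \to P$, I apply the lifting property of $(1)$ to $g = \mathrm{id}_P$; the resulting $h\colon P \to M$ with $f \circ h = \mathrm{id}_P$ is precisely a right inverse of $f$. For $(2) \Rightarrow (4)$, I invoke the standard fact that every module is a quotient of a free module: let $F$ be the free $R$-module on the underlying set of $P$ and $\pi\colon F \to P$ the canonical epimorphism sending each basis element to the corresponding element of $P$. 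Condition $(2)$ provides a right inverse $s\colon P \to F$, whence $F \cong \ker(\pi) \oplus s(P)$ and $s(P) \cong P$.

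For $(4) \Rightarrow (1)$, I first check that a free $R$-module itself satisfies $(1)$. If $F$ has basis $\{e_i : i \in I\}$, and $f\colon M \to N$ is an epimorphism with $g\colon F \to N$, then for each $i$ I pick some $m_i \in M$ with $f(m_i) = g(e_i)$, using the surjectivity of $f$, and extend $e_i \mapsto m_i$ $R$-linearly to obtain the desired $h$. If now $P$ is a direct summand of a free module $F$, with inclusion $\iota\colon P \to F$ and projection $\rho\colon F \to P$ satisfying $\rho \circ \iota = \mathrm{id}_P$, then given $f\colon M \to N$ epic and $g\colon P \to N$, I lift $g \circ \rho\colon F \to N$ through $f$ to obtain some $\widetilde{h}\colon F \to M$, and set $h := \widetilde{h} \circ \iota$; a direct computation gives $f \circ h = f \circ \widetilde{h} \circ \iota = g \circ \rho \circ \iota = g$.

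For $(1) \Leftrightarrow (3)$, the functor $\mathrm{Hom}_R(P, -)$ is always left exact, so its exactness on short exact sequences $0 \to L \to M \to N \to 0$ reduces to the surjectivity of the induced map $\mathrm{Hom}_R(P, M) \to \mathrm{Hom}_R(P, N)$, which is exactly what $(1)$ asserts. The equivalence of this exactness with the vanishing $\mathrm{Ext}^1_R(P, M) = 0$ for every $M$ is then read off the long exact sequence of $\mathrm{Ext}$ applied to the above short exact sequence.

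The proof is entirely classical and presents no substantial obstacle; the only step demanding mild care is $(4) \Rightarrow (1)$, where the choice of preimages $m_i$ for an arbitrarily indexed basis of $F$ uses the axiom of choice, which is freely available in our \ZFC{} setting, and where one must carefully track that the lifting from $F$ restricts correctly to $P$ via the splitting.
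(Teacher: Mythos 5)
The paper gives no proof of this theorem at all: it is quoted as standard material with a citation to Anderson--Fuller, and the only added remark is that $(1)$--$(3)$ remain equivalent without an identity. So there is nothing to compare your argument against line by line; what you have written is the classical textbook proof and it is correct. The cycle $(1)\Rightarrow(2)\Rightarrow(4)\Rightarrow(1)$ is exactly the usual argument (lift ${\rm id}_P$, split the canonical surjection from a free module, reduce to the free case via the retraction $\rho\circ\iota={\rm id}_P$), and the identification of $(1)$ with exactness of ${\rm Hom}_R(P,-)$ is right. The one place you gloss is $(1)\Rightarrow(3)$: to read ${\rm Ext}^1_R(P,M)=0$ off the long exact sequence you cannot use a generic sequence $0\to L\to M\to N\to 0$; you must apply it to a specific one of the form $0\to M\to I\to I/M\to 0$ with $I$ injective (so that ${\rm Ext}^1_R(P,I)=0$ and the surjectivity of ${\rm Hom}_R(P,I)\to{\rm Hom}_R(P,I/M)$ supplied by $(1)$ forces ${\rm Ext}^1_R(P,M)=0$), or else argue via the Baer description that every extension $0\to M\to E\to P\to 0$ splits by $(2)$. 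Either repair is one line, so this is a presentational wrinkle rather than a gap.
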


%It is known that
%the statements from (1) to (3) are also equivalent
%even if the ring $R$ has no identity.
%\marginpar{Remove the sentence here.}

%%%%%%%%%%%%%%%%%%%
\subsection{Path algebras and quiver representations}
%\marginpar{Change the title of this subsection}

%\begin{thm}[Bahlekeh-Kakaei-Salarian
%\text{\cite[Theorem 3.5]{BKS}}]
%\label{BKS thm}
%For any finite quiver $Q$ and 
%any commutative Noetherian ring $R$,
%if ${\sf ARC}_{R}({\rm mod}R)$ holds,
%then
%${\sf ARC}_{RQ}({\rm mod}RQ)$ holds.
%\end{thm}

%Auslander-Reiten proved that
%${\sf ARC}_{A}({\rm mod}A)$ holds
%for any algebra of finite representation type
%\cite{AR}.
%So in particular, 
%every algebraically closed field $K$ satisfies
%${\sf ARC}_{K}({\rm mod}K)$.

This subsection is devoted to 
the basics of representation theory of rings.
The readers can skip this subsection if they are familiar with 
path algebras and quiver representations.
Quivers, path algebras,
and linear representations of quivers are
some basic concepts of representation theory of Artin algebras.
Our notation and terminology are fairly standard,
see e.g. 
\cite{ARS, ASS}.
In the next paragraphs,
we refer to \cite[Chapters II-III]{ASS}
for definitions, notation,
and terminology.

A {\em quiver} denotes a directed graph.
Any quiver $Q$ consists of a pair of 
a set $Q_0$ of vertices and a set $Q_1$ of arrows.
Each arrow $a$ is equipped with its source $s(a)$ and 
its target $t(a)$.
A quiver $Q=(Q_0,Q_1)$ is called {\em finite}
if both $Q_0$ and $Q_1$ are finite sets.
A {\em path} of the quiver $Q$ is a finite sequence
$a_0 a_1 \cdots a_n$
of arrows of the quiver $Q$
such that,
for each $i$ with $0\leq i <n$,
the target of the arrow $a_i$ coincides with 
the source of the arrow $a_{i+1}$.
The path $a_0 a_1 \cdots a_n$ has length $n+1$.
For each vertex $v$ of the quiver $Q$,
we agree to associate with 
it
a path of length $0$,
called the {\em trivial path} or the {\em stationary path} {\em at 
the vertex} $v$, which is denoted by $e_v$.
A {\em cycle} is a non-trivial path whose source and target coincide.
A quiver is called {\em acyclic} if
there are no cycles in the quiver.
For a quiver $Q$, 
$\overline Q$ denotes the underlying graph of $Q$
that is obtained from $Q$ 
by forgetting the orientation of the arrows,
and 
a quiver $Q$ is called {\em connected} if
the graph $\overline Q$ is a connected graph.
For a field $K$ and a quiver $Q$,
the {\em path algebra $KQ$ of the quiver $Q$ over the field $K$}
is the $K$-algebra
whose underlying set is the $K$-vector space
whose basis is the set of all the paths of the quiver $Q$
(which includes all the stationary paths)
such that
the product of two paths 
$a_0 a_1 \cdots a_{m-1}$ and $b_0 b_1 \cdots b_{n-1}$
is defined as follows:
\[
a_0 a_1 \cdots a_{m-1} \cdot b_0 b_1 \cdots b_{n-1}
=
\left\{
\begin{array}{ll}
a_0 a_1 \cdots a_{m-1} b_0 b_1 \cdots b_{n-1}
& \text{if $t(a_{m-1}) = s(b_0)$}
\\[1em]
0_{KQ} & \text{otherwise.}
\end{array}
\right.
\]
The product of basic elements is extended to
arbitrary elements of $KQ$ by
distributivity.
We note that,
for any field $K$ and a quiver $Q$ with $Q_0$ finite,
$KQ$ also has an identity, 
which is of the form
$\displaystyle
\sum_{v\in Q_0} e_v$.
However, for any quiver $Q$ with infinitely many vertices,
$KQ$ does not have an identity.
We recall that
any path algebra $KQ$
of a quiver $Q$ over an algebraically closed field $K$
is hereditary
even when a quiver $Q$ is not finite
(see e.g. \cite[\S8.2]{GR}), 
that is,
its global dimension is not larger than $1$.

For a quiver $Q=(Q_0,Q_1)$ and a field $K$,
a {\em $K$-linear representation} of the quiver $Q$
is a system $\cX=\seq{\cX_v,\cX_a:v\in Q_0, a\in Q_1}$
such that,
for each vertex $v\in Q_0$,
$\cX_v$ is a $K$-vector space
and, for each arrow $a\in Q_1$,
%with source $v$ and target $w$,
$\cX_a$ is a $K$-linear map
from the $K$-vector space $\cX_{s(a)}$ into 
the $K$-vector space $\cX_{t(a)}$.
A $K$-linear representation is called {\em finite dimensional}
if 
each $\cX_v$, $v\in Q_0$, is a finite dimensional $K$-vector space.
For two $K$-linear representations $\cX$ and $\cY$,
%a {\em morphism from $\cY$ into $\cX$}
a {\em morphism from $\cX$ into $\cY$}
is a tuple $\varphi=\seq{\varphi_v:v\in Q_0}$
such that,
for each $v\in Q_0$,
$\varphi_v$ is a $K$-linear map from
the $K$-vector space $\cX_v$ into 
the $K$-vector space $\cY_v$
and,
for each arrow $a\in Q_1$, 
%with source $v$ and  target $w$,
the following diagram commutes:
%\[
%\xymatrix{
%\cY_{s(a)} \ar[r]^-{\cY_a} \ar[d]_-{\varphi_{s(a)}}
%& \cY_{t(a)} \ar[d]^-{\varphi_{t(a)}}
%\\
%\cX_{s(a)} \ar[r]^-{\cX_a} 
%& \cX_{t(a)}
%}
%\]
\[
\xymatrix{
\cX_{s(a)} \ar[r]^-{\cX_a} \ar[d]_-{\varphi_{s(a)}}
& \cX_{t(a)} \ar[d]^-{\varphi_{t(a)}}
\\
\cY_{s(a)} \ar[r]^-{\cY_a} 
& \cY_{t(a)} 
}
\text{\raisebox{-4em}{.}}
\]
${\rm Rep}_K Q$ denotes
the category of the $K$-linear representations of 
a quiver $Q$ over a field $K$,
and 
${\rm rep}_K Q$ denotes
the category of the finite dimensional 
$K$-linear representations $\cX$ of 
$Q$ over $K$.
In \cite{ASS},
these are defined for finite quivers,
however, 
we adopt them for all quivers.

There is a correspondence between 
$KQ$-modules and $K$-linear representation of $Q$
(see e.g. \cite[Theorem III.1.6]{ASS}).
For a $KQ$-module $M$,
define the $K$-linear representation $F(M)$ of $Q$
such that,
for each $v\in Q_0$,
$
F(M)_v:= M e_v =\set{m e_v: m\in M}
,
$
and,
for each $a\in Q_1$,
$F(M)_a$ is the $K$-homomorphism from $F(M)_{s(a)}$
into $F(M)_{t(a)}$ such that,
for each $x\in F(M)_{s(a)}$, 
$F(M)_a (x)=xa$.
For a $K$-linear representation $\cX$ of $Q$,
define the $KQ$-module $G(\cX)$
whose underlying set is
the direct sum 
$\displaystyle
\bigoplus_{v\in Q_0} \cX_v$
such that,
for each element 
$m=
\displaystyle
\sum_{v\in Q_0} x_v$
of 
$\displaystyle
\bigoplus_{v\in Q_0} \cX_v$
(in this notation,
for all but finitely many $v\in Q_0$,
$x_v$ is the zero of $\cX_v$),
$w\in Q_0$ and $a\in Q_1$,
$
m e_w := x_w
$
and
$
m a
:= \cX_a(m e_{s(a)})
,
$
and 
the product by any arbitrary element of $KQ$ 
is extended by distributivity.
We notice that,
for every $K$-linear representation $\cX$ of $Q$,
$F(G(\cX))=\cX$, 
and, 
for every $KQ$-module $M$,
if $M= \displaystyle \sum_{m \in M} mKQ$
then $G(F(M))=M$.
Therefore,
if $Q$ is a finite connected quiver,
then 
the category ${\rm Mod}KQ$ is equivalent to 
the category ${\rm Rep}_K Q$
by the functors $F$ and $G$
\cite[Theorem III.1.6]{ASS}
and,
for any finite acyclic quiver $Q$,
${\rm mod}KQ$ is equivalent to
${\rm rep}_K  Q$
\cite[Theorem III.1.7]{ASS}.
%When the quiver $Q$ is not finite,
%any object of ${\rm mod}KQ$ corresponds to 
%an object of ${\rm rep}_K Q$ as above,
%but
%the converse is not true in general.

%%%%%%%%%%%%%%%%%%%
\subsection{Path algebras of infinite quivers}
%\marginpar{Insert subsection here.}

%We have learned the following from 
%Hiroyuki Minamoto.
%

Throuout this subsection,
$F$ denotes the canonical functor from 
${\rm Mod}KQ$
to 
${\rm Rep}_K Q$,
and 
$G$ denotes the canonical functor from 
${\rm Rep}_K Q$ to ${\rm Mod} KQ$,
for a field $K$ and a quiver $Q$, 
as in the last paragraph of the previous subsection.
The following theorem gives a sufficient condition
for finitely generated projective modules over 
a finite dimensional 
%Artin 
algebra.
For example,
the following is mentioned without proof 
in the proof of \cite[Theorem 4.7]{Minamoto}.

\begin{thm}[Folklore]
\label{fdA}
Suppose that
$\Lambda$ is a 
finite dimensional 
%Artin 
algebra
over an algebraically closed field $K$
%Noetherian ring
with finite global dimension.
Then
for any finitely generated $\Lambda$-module $M$,
if 
${\rm Ext}^{\geq 1}_\Lambda(M,\Lambda)=0$
then
$M$ is projective.
\end{thm}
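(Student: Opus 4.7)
The plan is to proceed by induction on the projective dimension $\mathrm{pd}_\Lambda(M)$, which is a finite non-negative integer because $\Lambda$ has finite global dimension. If $\mathrm{pd}_\Lambda(M)=0$, then $M$ is already projective and there is nothing to prove.

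For the inductive step, suppose $n:=\mathrm{pd}_\Lambda(M)\geq 1$ and assume the theorem is already known for every finitely generated $\Lambda$-module of projective dimension strictly less than $n$. Since $\Lambda$ is finite dimensional over $K$, it is in particular (right) Noetherian, so I can choose a short exact sequence
\[
0\longrightarrow \Omega M \longrightarrow P \longrightarrow M \longrightarrow 0
\]
with $P$ a finitely generated projective $\Lambda$-module and $\Omega M$ again finitely generated. Dimension shifting then yields $\mathrm{pd}_\Lambda(\Omega M)=n-1$, and applying ${\rm Hom}_\Lambda(-,\Lambda)$ to the sequence above produces a long exact sequence from which one reads
\[
{\rm Ext}^{i}_\Lambda(\Omega M,\Lambda)\;\cong\;{\rm Ext}^{i+1}_\Lambda(M,\Lambda)\;=\;0\qquad(i\geq 1).
\]
So $\Omega M$ satisfies the inductive hypothesis and is therefore projective.

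It remains to argue that the displayed short exact sequence splits. Because $\Omega M$ is a finitely generated projective right $\Lambda$-module, it is a direct summand of some finitely generated free module $\Lambda^m$, hence ${\rm Ext}^1_\Lambda(M,\Omega M)$ is a direct summand of ${\rm Ext}^1_\Lambda(M,\Lambda^m)\cong{\rm Ext}^1_\Lambda(M,\Lambda)^m=0$. Consequently the sequence splits, $M$ is a direct summand of the projective module $P$, and $M$ itself is projective.

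The only mildly delicate ingredient is ensuring that the syzygy $\Omega M$ remains finitely generated at each step of the induction, which is exactly where the finite dimensionality of $\Lambda$ (and hence its Noetherianity) enters; the rest is a routine application of dimension shifting together with the long exact sequence of $\mathrm{Ext}$, so I do not expect any substantive obstacle.
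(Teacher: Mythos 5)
Your proof is correct and rests on the same two ingredients as the paper's: dimension shifting along a finite resolution by finitely generated projectives, and the observation that ${\rm Ext}^{\geq 1}_\Lambda(M,P)=0$ for any finitely generated projective $P$ because $P$ is a summand of some $\Lambda^m$. The only difference is organizational — you induct on the projective dimension and split off the first syzygy, whereas the paper argues by contradiction at the top of the resolution, splitting $0\to P_d\to P_{d-1}\to\Omega^{d-1}M\to 0$ via ${\rm Ext}^1_\Lambda(\Omega^{d-1}M,P_d)\cong{\rm Ext}^d_\Lambda(M,P_d)=0$ — so this is essentially the same argument.
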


\begin{proof}
Suppose that
$M$ is a finitely generated $\Lambda$-module
and
${\rm Ext}^{\geq 1}_\Lambda(M,\Lambda)=0$.
The point of the proof is to show that,
{\em
for any finitely generated projective $\Lambda$-module $P$,
${\rm Ext}^{\geq 1}_\Lambda(M, P)=0$.}
To see this,
let $P'$ be a complementary direct summand of $P$
such that
$P \oplus P' $
is isomorphic to 
a direct sum $\Lambda^{n}$ of finitely many copies
of $\Lambda$.
Then 
for each integer $k\geq 1$,
\[
{\rm Ext}^{k}_{\Lambda}(M,\Lambda^{n})
=\left({\rm Ext}^k_{\Lambda}(M,\Lambda)\right)^n
=0
,
\]
and
\[
{\rm Ext}^k_\Lambda(M,\Lambda^{n})
=
{\rm Ext}^k_\Lambda(M,P)
\oplus
{\rm Ext}^k_\Lambda(M,P')
.
\]
Therefore,
${\rm Ext}^k_\Lambda(M,P)=0$.
Hence
${\rm Ext}^{\geq 1}_\Lambda(M,P)=0$.

Let $d$ be the projective dimension 
${\rm pd}M$ of $M$. 
Since $\Lambda$ has finite global dimension,
$0\leq d < \infty$.
Assume, towards a contradiction,
that $d\geq 1$.
Let 
the sequence 
\[
\xymatrix@C=8pt@R=3pt{
0 \ar[rr] 
&& P_d \ar[rr]^-{f_d} 
&& P_{d-1} \ar[rr]^-{f_{d-1}} \ar[rd]
&& \cdots & \cdots 
&  \cdots \ar[rr]^{f_1} %\ar[rd]
&& P_0 \ar[rr]^{f_0} 
&& M \ar[rr]%^-{f_{-1}} 
&& 0
\\
&&&&& \Omega^{d-1}M \ar[rd] %\ar[ru]
%&&&& K_1 \ar[ru] \ar[rd]
\\
&&&& %0 \ar[ru]
&& 0 
%&& 0 \ar[ru]
%&& 0
}
\]
be a projective resolution of $M$
of length $d$
such that
each $P_i$ is finitely generated,
where
$\Omega^{d-1}M$ is the $(d-1)$-th syzygy of $M$.
Since ${\rm pd }M=d$,
the projective dimension of
the $\Lambda$-module $\Omega^{d-1}M$
is exactly $1$,
in particular, 
$\Omega^{d-1}M$ is not projective.
Applying
${\rm Hom}_\Lambda( - , P_d)$
to the short exact sequence
$0 \to P_d \to P_{d-1} \to \Omega^{d-1}M \to 0$,
we obtain the following exact sequence
\begin{multline*}
\xymatrix@C=10pt{
0 \ar[r] &
{\rm Hom}_\Lambda(\Omega^{d-1}M, P_d) \ar[r] &
}
\\
\xymatrix@C=10pt{
{\rm Hom}_\Lambda(P_{d-1}, P_d) \ar[rrrr]^-{{\rm Hom}_\Lambda(f_d,P_d)}
&&&&
{\rm Hom}_\Lambda(P_{d}, P_d) \ar[r] &
{\rm Ext}^1_\Lambda(\Omega^{d-1}M, P_d)
}.
\end{multline*}
Since $P_d$ is a finitely generated projective $\Lambda$-module, 
\[
{\rm Ext}^1_\Lambda(\Omega^{d-1}M, P_d) =
{\rm Ext}^{d}_\Lambda(M, P_d) =0
.
\]
Thus 
${\rm Hom}_\Lambda(f_d,P_d)$ is surjective.
So
there exists a homomorphism $g_d$
from $P_{d-1}$ into $P_d$ such that
the composition $g_d \circ f_d$ is the identity on $P_d$.
Therefore
the short exact sequence 
$0 \to P_d \to P_{d-1} \to \Omega^{d-1}M \to 0$
splits,
and
hence
$\Omega^{d-1}M$ is a direct summand of the projective module
$P_{d-1}$,
which is a contradiction.
\end{proof}

\begin{defn}
For a ring $R$ and a subclass $\fM$ of ${\rm Mod}R$, 
we define the assertion ${\sf P}_R(\fM)$
that means that,
for any $M\in\fM$,
if 
${\rm Ext}^{\geq 1}_R(M,R)=0$
then
$M$ is projective.
\end{defn}

\begin{remark}
\label{Noether ring}
For any Noetherian ring $\Lambda$
with finite global dimension
and
any finitely generated $\Lambda$-module $M$,
there is a projective precover of $M$
which is finitely generated.
So the above proof works for any Noetherian ring
of finite global dimension.
Therefore,
for  any Noetherian ring $\Lambda$
of finite global dimension, 
${\sf P}_\Lambda({\rm mod}\Lambda)$.
\end{remark}

It is known that
any path algebra $KQ$
over an algebraically closed field $K$, 
even when the quiver $Q$ is not finite,
is hereditary, that is,
its global dimension is not larger than $1$
(see e.g. \cite[\S8.2]{GR}).
So
any path algebra over an algebraically closed field 
is an algebra with finite global dimension.
Therefore 
{\bfsc Theorem \ref{fdA}} implies the following.

\begin{cor}
\label{fqa}
Suppose that 
$K$ is an algebraically closed field
and
$Q$ is a finite acyclic quiver.
Then
${\sf P}_{KQ}({\rm mod}KQ)$.
In particular, 
for any finitely generated $KQ$-module $M$,
if 
${\rm Ext}^{1}_{KQ}(M, KQ)=0$
then
$M$ is projective.
\end{cor}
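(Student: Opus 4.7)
The plan is to deduce the corollary by a direct application of \textbf{Theorem \ref{fdA}}, after verifying that its hypotheses are satisfied by $\Lambda = KQ$. First I would check that $KQ$ is a finite dimensional $K$-algebra: since $Q_0$ and $Q_1$ are finite, and since $Q$ has no cycles, the length of any path in $Q$ is bounded (an arrow can appear in a path at most once without creating a cycle). Consequently there are only finitely many paths, so $\dim_K KQ < \infty$. The finiteness of $Q_0$ also gives an identity $\sum_{v \in Q_0} e_v$ for $KQ$, as noted in the preliminaries.

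Second, I would invoke the fact already quoted in the preliminaries, that any path algebra $KQ$ is hereditary (\cite[\S8.2]{GR}), so its global dimension is at most $1$, in particular finite. Thus the two hypotheses of \textbf{Theorem \ref{fdA}}, namely that $\Lambda$ is a finite dimensional algebra over an algebraically closed field and has finite global dimension, both hold for $\Lambda = KQ$. Applying that theorem immediately yields ${\sf P}_{KQ}({\rm mod}\,KQ)$: every finitely generated $KQ$-module $M$ with ${\rm Ext}^{\geq 1}_{KQ}(M,KQ)=0$ is projective.

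Finally, for the ``in particular'' clause, I would observe that because ${\rm gl.dim}\,KQ \le 1$, one automatically has ${\rm Ext}^{k}_{KQ}(M,KQ) = 0$ for every $k \geq 2$ and every $M$. Therefore the single hypothesis ${\rm Ext}^{1}_{KQ}(M,KQ) = 0$ already entails ${\rm Ext}^{\geq 1}_{KQ}(M,KQ) = 0$, and the preceding paragraph concludes that $M$ is projective. I do not anticipate any real obstacle; the entire argument is a routine unpacking of \textbf{Theorem \ref{fdA}} together with the hereditary property of $KQ$.
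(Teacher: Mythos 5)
Your proposal is correct and follows exactly the route the paper intends: the corollary is stated immediately after the observation that $KQ$ is hereditary, and is derived by applying Theorem \ref{fdA} to the finite dimensional algebra $KQ$, with the ``in particular'' clause coming from ${\rm gl.dim}\,KQ\le 1$ just as you argue. Your verification that a finite acyclic quiver has only finitely many paths (so $\dim_K KQ<\infty$) is the only detail the paper leaves implicit, and you supply it correctly.
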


%It is known that
%a path algebra of a cyclic quiver is Noetherian.
%So the following holds from 
%{\itsc Remark} {\rmsc \ref{Noether ring}].
%
%\begin{cor}
%Suppose that $Q$ is a cyclic quiver
%and $K$ is an algebraically closed field.
%Then
%${\sf P}_{KQ}({\rm mod}KQ)$ holds.
%In particular, 
%for any finitely generated $KQ$-module $M$,
%if 
%${\rm Ext}^{1}_{KQ}(M, KQ)=0$
%then
%$M$ is projective.
%\end{cor}

\begin{remark}
A finite quiver of the form
\[
%\text{\input{circle0.tex}}
%\ \ \ \ 
%%%\[
%%%\xymatrix@C=2pt@R=5pt{
%%%&&&& \circ \ar@/^4pt/[drrr]
%%%\\
%%%& \circ \ar@/^4pt/[urrr] &&& &&& \circ \ar@/^4pt/[ddr]
%%%\\
%%%\\
%%%\circ \ar@/^3pt/[uur] &&&& &&&& \circ \ar@/^3pt/[ddl]
%%%\\
%%%\\
%%%& \circ \ar@/^3pt/[uul] &&& &&& \circ \ar@{.}@/^10pt/[llllll]
%%%}
%%%\]
%%%\[
%%%\begin{xy}
%%%(13,24) *{\circ}="A", 
%%%(26,16.5) *{\circ}="B",
%%%(26,1.5) *{\circ}="C",
%%%(0,16.5) *{\circ}="D",
%%%(0,1.5) *{\circ}="E",
%%%(13,-6) ="F"
%%%\ar @/^5pt/ "A";"B" ,
%%%\ar @/^4pt/ "B";"C" ,
%%%\ar @/^4pt/ "E";"D" ,
%%%\ar @/^5pt/ "D";"A" ,
%%%\ar @/^6pt/ @{.} "C";"F" ,
%%%\ar @/^5.5pt/ @{.} "F";"E" 
%%%\end{xy}
%%%\]
\begin{xy}
(10,20) *++={\circ} ="A", 
(17,17) *++={\circ}="B",
(20,10) *++={\circ}="C",
(17,3) *++={\circ}="D",
(3,3) *++={\circ}="E",
(0,10) *++={\circ}="F",
(3,17) *++={\circ}="G",
(10,0) ="H",
\ar @/^2pt/ "A";"B" ,
\ar @/^2pt/ "B";"C" ,
\ar @/^2pt/ "C";"D" ,
\ar @/^2pt/ @{.} "D";"H" ,
\ar @/^2pt/ @{.} "H";"E" ,
\ar @/^2pt/  "E";"F" ,
\ar @/^2pt/ "F";"G" 
\ar @/^2pt/ "G";"A" 
\end{xy}
\]
is called a cyclic quiver.
Since the path algebra of a cyclic quiver 
$Q$ over an algebraically closed field $K$ 
is Noetherian
with finite global dimension,
it follows from {\itsc Remark} {\rmsc \ref{Noether ring}}
that 
${\sf P}_{KQ}({\rm mod}KQ)$.
\end{remark}

\medskip

We can extend the above corollary to 
{\em some} infinite quivers.
To introduce such infinite quivers explicitly,
we define the following notions.

\begin{defn}\begin{enumerate}
\item
A quiver $P=(P_0,P_1)$ is called a {\em subquiver} of 
a quiver $Q=(Q_0,Q_1)$
if
$P_0$ and $P_1$ are subsets of $Q_0$ and $Q_1$
respectively
(hence,
for any $a\in P_1$, $s(a)$ and $t(a)$ belong to $P_0$).

\item
For a quiver $Q$, 
a subquiver $P$ of $Q$,
a field $K$
and
a $K$-linear representation $\cX$ of $Q$,
the $K$-linear representation $\cX\restriction P$ of 
the quiver $P$ is called 
the {\em restricted representation of $\cX$ by $P$} 
if 
for every $v\in P_0$ and $a\in P_1$,
$(\cX\restriction P)_v=\cX_v$
and 
$(\cX\restriction P)_a=\cX_a$.

\item
For a quiver $Q=(Q_0,Q_1)$ and a subset $P_0'$ of $Q_0$,
the {\em closure of $P_0'$ under $Q$}
is the subquiver $\overline{P_0'}^Q=
\left(
\left( \overline{P_0'}^Q \right)_0 ,
\left( \overline{P_0'}^Q \right)_1
\right)$
of the quiver $Q$
such that
\begin{multline*}
\left( \overline{P_0'}^Q \right)_0 :=
\big\{
v\in Q_0:
\text{ there exists a path from a member of $P_0'$} 
\\
\text{to the vertex $v$ through the quiver $Q$}
\big\}
\end{multline*}
and
\[
\left( \overline{P_0'}^Q \right)_1 :=\set{a\in Q_1:
%\text{ there are $v$ and $w$ in $P_0$
%such that
%$a$ is an arrow from $v$ to $w$}
s(a)\in \left( \overline{P_0'}^Q \right)_0
}.
\]
%$\overline{P_0'}^Q$ denotes the closure of $P_0'$ under $Q$.

A subquiver $P$ of a quiver $Q$ is called
a {\em closed subquiver} of $Q$ if
$P$ is a closure of some subset of $Q_0$ under $Q$.
A subquiver $P$ of a quiver $Q$ is called
a {\em finite closed subquiver} of $Q$ if
$P$ is a closed subquiver of $Q$
and it is also a finite quiver.

%\item
%For a quiver $Q$ and $v\in Q_0$,
%$ \overline{\{v\}}^Q$ is the closure of $\{v\}$ under $Q$.

%\item
%For a quiver $Q$ and a closed subquiver $P$ of $Q$,
%$\cX(Q,P)$ denotes the $K$-linear representation of $Q$
%such that:
%for each $v\in P_0$, 
%$\cX(Q,P)_v:=K$; 
%for each $v\in Q_0\setminus P_0$,
%$\cX(Q,P)_v :=\{0_K\}$; 
%for each $a\in P_1$,
%$\cX(Q,P)_a$ is identity map on $K$;
%and 
%for each $a\in Q_1\setminus P_1$,
%$\cX(Q,P)_a$ is the trivial map
%from $\cX(Q,P)_{s(a)}$ into $\cX(Q,P)_{t(a)}$.

\end{enumerate}
\end{defn}

\begin{prop}
\label{closed quiver}
Suppose that
$K$ is a field,
$Q$ is a quiver,
$\cX$ is a $K$-linear representation of $Q$
such that
${\rm Ext}^1_{KQ}( G(\cX) , KQ)=0$,
and
$P$ is a closed subquiver of $Q$.
Then 
${\rm Ext}^1_{KP}( G(\cX\restriction P) , KP)=0$.
\end{prop}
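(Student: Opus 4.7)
The plan is to translate the vanishing of $\mathrm{Ext}^1(G(-),R)$ over a path algebra into a surjectivity statement on cocycle data via the standard length-$1$ projective resolution, and then to transfer such data back and forth between $KP$ and $KQ$ using canonical projections that exist precisely because $P$ is closed in $Q$.

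I would first establish, for any quiver $Q'$, representation $\cY$ of $Q'$, and right $KQ'$-module $N$, that the standard projective resolution
\[
0\to\bigoplus_{a\in Q'_1}\cY_{s(a)}\otimes_K e_{t(a)}KQ'\to\bigoplus_{v\in Q'_0}\cY_v\otimes_K e_vKQ'\to G(\cY)\to 0,
\]
combined with the natural isomorphism $\mathrm{Hom}_{KQ'}(\cY_v\otimes_K e_vKQ',N)\cong\mathrm{Hom}_K(\cY_v,Ne_v)$, identifies $\mathrm{Ext}^1_{KQ'}(G(\cY),N)$ with the cokernel of
\[
\partial\colon\prod_{v\in Q'_0}\mathrm{Hom}_K(\cY_v,Ne_v)\to\prod_{a\in Q'_1}\mathrm{Hom}_K(\cY_{s(a)},Ne_{t(a)}),\quad (\phi_v)_v\mapsto\bigl(y\mapsto\phi_{t(a)}(\cY_a(y))-\phi_{s(a)}(y)\cdot a\bigr)_a.
\]
Then the desired conclusion reduces to the following concrete statement: given any tuple $(\psi_a)_{a\in P_1}$ with $\psi_a\colon\cX_{s(a)}\to KPe_{t(a)}$, one must produce a tuple $(\phi_v)_{v\in P_0}$ with $\phi_v\colon\cX_v\to KPe_v$ satisfying $\psi_a(y)=\phi_{t(a)}(\cX_a(y))-\phi_{s(a)}(y)\cdot a$ for every $a\in P_1$.

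To build $(\phi_v)$, I would first extend $(\psi_a)_{a\in P_1}$ to a tuple $(\tilde\psi_a)_{a\in Q_1}$ by composing $\psi_a$ with the inclusion $KPe_{t(a)}\hookrightarrow KQe_{t(a)}$ for $a\in P_1$ and setting $\tilde\psi_a=0$ for $a\in Q_1\setminus P_1$; the closedness of $P$ forces $s(a)\notin P_0$ in the latter case, so no compatibility with $\cX\restriction P$ is required there. The hypothesis $\mathrm{Ext}^1_{KQ}(G(\cX),KQ)=0$ then yields $(\tilde\phi_v)_{v\in Q_0}$ with $\tilde\phi_v\colon\cX_v\to KQe_v$ satisfying $\tilde\psi_a(y)=\tilde\phi_{t(a)}(\cX_a(y))-\tilde\phi_{s(a)}(y)\cdot a$ for all $a\in Q_1$. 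For each $v\in P_0$, I would introduce the $K$-linear projection $\pi_v\colon KQe_v\to KPe_v$ that fixes every basis path $p$ ending at $v$ with $p\in KP$ (equivalently $s(p)\in P_0$) and annihilates every other basis path, and then set $\phi_v:=\pi_v\circ\tilde\phi_v$.

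To conclude, I would apply $\pi_{t(a)}$ to the $KQ$-level identity for each $a\in P_1$. The left-hand side becomes $\psi_a(y)$ since $\tilde\psi_a(y)\in KPe_{t(a)}$ is fixed by $\pi_{t(a)}$. The right-hand side reduces to $\phi_{t(a)}(\cX_a(y))-\phi_{s(a)}(y)\cdot a$ provided the intertwining identity $\pi_{t(a)}(q\cdot a)=\pi_{s(a)}(q)\cdot a$ holds for all $q\in KQe_{s(a)}$. This identity follows from the closedness of $P$: a basis path $q_i$ ending at $s(a)\in P_0$ lies in $KP$ iff $s(q_i)\in P_0$, and since $s(q_i\cdot a)=s(q_i)$ the same criterion characterizes $q_i\cdot a\in KP$. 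I expect the main obstacle to be isolating this intertwining identity, as it is precisely where the definition of closed subquiver is consumed; without it, the projected tuple $(\phi_v)$ would not satisfy the cocycle identity.
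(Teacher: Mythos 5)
Your proposal is correct, but it takes a genuinely different route from the paper's. The paper argues at the level of extensions and representation categories: it introduces the restriction functor $S\colon \mathrm{Rep}_K Q\to\mathrm{Rep}_K P$ and the extension-by-zero functor $T$ in the other direction (well defined precisely because every arrow outside $P_1$ has source outside $P_0$), observes that $KP=\bigoplus_{v\in P_0}e_vKP=\bigoplus_{v\in P_0}e_vKQ$ makes $T(F(KP))$ a direct summand of $F(KQ)$ so that ${\rm Ext}^1_{KQ}(G(\cX),KP)=0$, and then lifts an arbitrary short exact sequence $0\to F(KP)\to\cE'\to\cX\restriction P\to 0$ of $P$-representations to one of $Q$-representations with kernel $T(F(KP))$; the lifted sequence splits, and applying $S$ splits the original one. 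You instead pass through the standard length-one projective resolution, identify ${\rm Ext}^1$ with the cokernel of the coboundary map $\partial$, extend a $P$-cocycle by zero to a $Q$-cocycle, solve over $Q$, and project back with $\pi_v$. Both arguments consume closedness in the same two places (arrows leaving $P_0$ stay in $P$, hence extension by zero is legitimate and a path $p$ ending in $P_0$ lies in $KP$ iff $s(p)\in P_0$), and your intertwining identity $\pi_{t(a)}(qa)=\pi_{s(a)}(q)a$ is exactly where the second fact is used. What the paper's route buys is that it never needs the standard resolution, which over an infinite quiver and the non-unital ring $KQ$ is the one piece of infrastructure you would have to set up with some care (projectivity of $\cX_v\otimes_K e_vKQ$, exactness of the resolution, and the identification of ${\rm Ext}^1$ as $\mathrm{coker}\,\partial$); what your route buys is an explicit splitting formula, a sharply localized use of the closedness hypothesis, and the avoidance of the paper's implicit choice of $K$-linear sections needed to define $\cE_a$ for arrows $a\in Q_1\setminus P_1$ with $t(a)\in P_0$ when lifting the exact sequence.
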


\begin{proof}
Let $S$ be the functor from
the category 
${\rm Rep}_K Q$ into the category ${\rm Rep}_K P$
such that, 
for each $K$-linear representation $\cY$ of $Q$,
$S(\cY):=\cY\restriction P$,
and
let $T$ be the functor 
from ${\rm Rep}_K P$ into ${\rm Rep}_K Q$
such that, 
for each $K$-linear representation $\mathcal Z$ of $P$,
$T(\mathcal Z)$ is the $K$-linear representation of $Q$
such that
$T(\mathcal Z)_v := \mathcal Z_v$
for every $v\in P_0$,
$T(\mathcal Z)_v$ is the trivial $K$-vector space
for every $v\in Q_0\setminus P_0$,
$T(\mathcal Z)_a:= \mathcal Z_a$ 
for every $a\in P_1$,
and
$T(\mathcal Z)_a$ is the unique $K$-linear map
from the trivial $K$-vector space 
$T(\mathcal Z)_{s(a)}$
into 
the $K$-vector space 
$T(\mathcal Z)_{t(a)}$
for every $a\in Q_1\setminus P_1$.
%where $s(a)$ and $t(a)$ are the source 
%and the target of the arrow $a$ respectively.
We notice that
both $S$ and $T$ are exact functors,
and the functor $T$ is a right adjoint of the restricted functor $S$.
Moreover,
since $P$ is a closed subquiver of $Q$,
$T$ is well-defined,
that is,
the above $T(\mathcal Z)$
is certainly a $K$-representation of $Q$.
We also notice that
the composition $S\circ T$ is the identity functor
over ${\rm Rep}_K P$,
and 
\[
KP = \bigoplus_{v\in P_0} e_v KP
=
\bigoplus_{v\in P_0} e_v KQ
,
\]
which implies that
$T(F(KP))$ is a direct summand of $F(KQ)$.
Note that 
$G(T(F(KP)))$ is just $KP$ as a $KQ$-module, 
so it follows from our assumption that
${\rm Ext}^1_{KQ}( G(\cX) , KP )=0$.

${\rm Ext}^1_{KQ}( G(\cX) , KP )=0$ 
means that
any short exact sequence of $K$-linear representations of $Q$
of the form
\[
\xymatrix@C=15pt{
0 \ar[r] & T(F(KP)) \ar[r] & \cE \ar[r]^-{\varphi}
&  \cX \ar[r] & 0
}
\]
splits.
We note that
in such a short exact sequence, 
for any $v\in Q_0\setminus P_0$,
$\cE_v = \cX_v$ and 
$\varphi_v$ is an automorphism of $\cX_v$
(because $T(F(KP))_v$ is the trivial $K$-vector space).
So, 
for any short exact sequence 
of $K$-linear representations of $P$
of the form
\[
L': \ 
\xymatrix@C=15pt{
0 \ar[r] & F(KP) \ar[r] & \cE' \ar[r] & 
\cX\restriction P \ar[r] & 0
},
\]
there exists a short exact sequence 
of $K$-linear representations of $Q$
of the form
\[
L: \ 
\xymatrix@C=15pt{
0 \ar[r] & T(F(KP)) \ar[r] & \cE \ar[r] & 
\cX \ar[r] & 0
}
\]
such that
$S(L)=L'$.
Therefore,
it follows that
any short exact sequence of $K$-linear representations of $P$
of the form
\[
\xymatrix@C=15pt{
0 \ar[r] & F(KP) \ar[r] & \cE' \ar[r] & 
\cX\restriction P \ar[r] & 0
}
\]
splits,
which is equivalent to say that
${\rm Ext}^1_{KP}( G(\cX\restriction P) ,KP)=0$.
\end{proof}

\begin{prop}
\label{ext not vanish}
Suppose that 
$K$ is a field,
$Q$ is an acyclic quiver
that contains the quiver 
\[
\xymatrix@C=15pt{
v_0
& v_1 \ar[l] 
& v_2 \ar[l]
& \cdots \ar[l]
& v_n \ar[l]
& v_{n+1} \ar[l]
& \cdots \ar[l]
}
\]
as a subquiver, 
%such that,
%for each $n\in\omega$,
%$\overline{v_n}^Q$ is a finite acyclic quiver,
and
$\cX$ is a
finite dimensional $K$-linear representation of $Q$
such that, 
for each $n\in\omega$,
$\cX_{v_n}\neq\{0_K\}$,
and
$\cX\restriction \overline{\set{v_n}}^Q$ is a direct sum
%of $l_n$ many copies of 
%the corresponding $K$-linear representation $F(e_{v_n}KQ)$ of $e_{v_n}KQ$,
%where $l_n$ is the number of the arrows 
%from some vertex $v_{m}$, $m\in\omega$, to $v_n$ in $Q_1$,
of finitely many copies of 
the corresponding $K$-linear representation $F(e_{v_n}KQ)$ of $e_{v_n}KQ$.
%for each $v\in Q_0$,
%if $v$ does not belong to any quiver 
%$\overline{\set{v_n}}^Q$, $n\in \omega$,
%then $\cX_v=\{0_K\}$.
Then 
${\rm Ext}^1_{KQ}(G(\cX),KQ)\neq 0$.
\end{prop}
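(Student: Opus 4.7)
I plan to exhibit a nonzero element of $\mathrm{Ext}^{1}_{KQ}(G(\cX), KQ)$ by producing a homomorphism from the first syzygy of $G(\cX)$ into $KQ$ that fails to extend to the corresponding projective cover. Since $KQ$ is hereditary, $G(\cX)$ admits the standard projective resolution
\[
0 \to \bigoplus_{a \in Q_{1}} P(t(a)) \otimes_{K} \cX_{s(a)} \xrightarrow{d} \bigoplus_{v \in Q_{0}} P(v) \otimes_{K} \cX_{v} \xrightarrow{\pi} G(\cX) \to 0,
\]
where $P(v) := G(e_{v} KQ)$, $\pi(e_{v} \otimes x) = x$ for $x \in \cX_{v}$, and $d$ sends the top of the $a$-summand evaluated at $x \in \cX_{s(a)}$ to $a \otimes x - e_{t(a)} \otimes \cX_{a}(x)$. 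Hence $\mathrm{Ext}^{1}_{KQ}(G(\cX), KQ)$ is the cokernel of the induced map
\[
d^{*}\colon \prod_{v \in Q_{0}} \mathrm{Hom}_{K}(\cX_{v}, F(KQ)_{v}) \to \prod_{a \in Q_{1}} \mathrm{Hom}_{K}(\cX_{s(a)}, F(KQ)_{t(a)}),
\]
which sends $(y_{v})_{v}$ to the tuple whose $a$-component is $x \mapsto y_{s(a)}(x) \cdot a - y_{t(a)}(\cX_{a}(x))$. The task reduces to producing some $f$ in the codomain of $d^{*}$ that is not in its image.

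A key preliminary step is to produce a coherent chain of nonzero vectors $x_{n} \in \cX_{v_{n}}$ with $\cX_{a_{n}}(x_{n+1}) = x_{n}$ for every $n \in \omega$. Since $Q$ is acyclic, $F(e_{v_{n}}KQ)_{v_{n}} = K$, so the hypothesis $\cX \restriction \overline{\{v_{n}\}}^{Q} \isomorphic F(e_{v_{n}}KQ)^{k_{n}}$ yields $\dim_{K} \cX_{v_{n}} = k_{n} \geq 1$, and each $\cX_{a_{n}}$ is injective (corresponding to multiplication by $a_{n}$ in the standard basis of each summand). Hence $k_{n+1} \leq k_{n}$, the sequence $(k_{n})_{n}$ is eventually constant, and $\cX_{a_{n}}$ is an isomorphism for all $n$ past some $N_{0}$; the chain is constructed by picking $x_{N_{0}} \neq 0$, extending upward via $(\cX_{a_{n}})^{-1}$ and downward via $\cX_{a_{n}}$. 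I then define $f$ by $f_{a_{n}}(x_{n+1}) := -e_{v_{n}}$ (and $0$ on a $K$-linear complement of $\seq{x_{n+1}}$ in $\cX_{v_{n+1}}$) for each $n \in \omega$, and $f_{a} := 0$ for every arrow $a \in Q_{1} \setminus \{a_{n} : n \in \omega\}$.

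Suppose for contradiction that $f = d^{*} y$. Setting $u_{n} := y_{v_{n}}(x_{n}) \in F(KQ)_{v_{n}}$, the equation $(d^{*} y)_{a_{n}}(x_{n+1}) = f_{a_{n}}(x_{n+1})$ yields the recurrence
\[
u_{n} - u_{n+1} \cdot a_{n} = e_{v_{n}} \quad (n \in \omega).
\]
Each $u_{n}$ is a finite $K$-linear combination of paths in $Q$ ending at $v_{n}$; decompose $u_{n} = z_{n} + w_{n}$, where $z_{n}$ collects the contributions from $L$-paths (those using only arrows of the infinite path $L$), which at $v_{n}$ are exactly $\pi_{m,n} := a_{m-1}\cdots a_{n}$ for $m > n$ together with $\pi_{n,n} := e_{v_{n}}$. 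Right multiplication by $a_{n}$ preserves this decomposition since $\pi_{m,n+1} \cdot a_{n} = \pi_{m,n}$ and any path using a non-$L$-arrow retains that arrow after appending $a_{n}$. Projecting the recurrence onto the $L$-part gives $z_{n} - z_{n+1} \cdot a_{n} = e_{v_{n}}$ in $F(KL)_{v_{n}}$; writing $z_{n} = \sum_{m \geq n} c_{n,m} \pi_{m,n}$ with finite support yields $c_{n,n} = 1$ and $c_{n,m} = c_{n+1,m}$ for $m > n$, so by induction $c_{0,m} = 1$ for every $m \in \omega$---contradicting the finite support of $z_{0}$.

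The main obstacle is the bookkeeping around the $L$-versus-non-$L$ decomposition of paths in the potentially complicated quiver $Q$ and the verification that right multiplication by $a_{n}$ respects it; the construction of the coherent chain and the explicit form of the canonical projective resolution are routine given the structural hypothesis and the fact that $KQ$ is hereditary.
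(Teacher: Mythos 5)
Your proof is correct, but it runs along a different technical route than the paper's. You work over the full quiver $Q$ with the canonical two-term projective resolution $0\to\bigoplus_{a}P(t(a))\otimes_K\cX_{s(a)}\to\bigoplus_{v}P(v)\otimes_K\cX_v\to G(\cX)\to 0$ and exhibit an explicit cocycle $f$ that is not a coboundary, whereas the paper first passes to the closed subquiver $P=\overline{\set{v_n:n\in\omega}}^Q$ via Proposition~\ref{closed quiver}, argues that all but finitely many of the path-multiplicities $d_n$ equal $1$ so that $\cX\restriction P$ is a direct sum of copies of a single multiplicity-one representation $\cX^0$, and then uses the ad hoc presentation $\bigcup_n e_{v_n}KP\twoheadrightarrow G(\cX^0)$ with explicitly computed kernel generated by the elements $e_{v_m}-a_n\cdots a_m$. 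Your coherent chain $(x_n)$, obtained from the eventual-isomorphism argument on the non-increasing multiplicities $k_n$, plays exactly the role of the paper's reduction to $\cX^0$: it selects one copy of the ray inside $\cX$. The final contradictions are the same obstruction in two guises --- your telescoping recurrence $c_{0,m}=1$ for all $m$ versus the paper's conclusion that $\psi(e_{v_0})$ would need infinite support --- and indeed your one-step cocycle $f_{a_n}(x_{n+1})=-e_{v_n}$ is the ``untelescoped'' form of the paper's $\varphi(e_{v_m}-a_n\cdots a_m)=\sum_{i=0}^{n-m}a_{m+i}\cdots a_m$. What your route buys is economy: you bypass Proposition~\ref{closed quiver} and the explicit kernel computation entirely. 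What it costs is that you must know the canonical resolution is still a projective resolution for an infinite quiver; in particular you use that $e_vKQ$ is projective for \emph{every} vertex $v$, which the paper only records under a finiteness hypothesis in Proposition~\ref{proj rep} (it does hold in general, since $e_v$ is idempotent and the lifting property for $e_vKQ$ is verified directly by $h(x):=mx$, but you should say so rather than take it for granted). The bookkeeping you flag as the main obstacle --- that right multiplication by $a_n$ preserves the decomposition into $L$-paths and non-$L$-paths --- is indeed fine, since appending $a_n$ is injective on paths and cannot destroy a non-$L$-arrow.
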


In \cite[Definition 3.5]{Enochsetc:flatflat}, 
some type of quivers is defined, 
which is called rooted.
It is true that 
a quiver $Q$ is rooted iff $Q$ does not contain 
the quiver $A_\infty^{\leftarrow}$ as a subquiver
\cite[Proposition 3.6]{Enochsetc:flatflat}.
So a quiver that satisfies the assumption of the proposition 
is not rooted.
For example, 
let $Q$ be the following quiver
\[
\xymatrix@C=15pt@R=15pt{
 w_0 \ar[d] & w_1 \ar[d] & w_2 \ar[d] & w_3 \ar[d] 
& w_4 \ar[d] 
\\
v_0
& v_1 \ar[l] 
& v_2 \ar[l]
& v_3 \ar[l]
& v_4 \ar[l]
& \cdots \ar[l]  
}
\]
and let $P:= \overline{\set{v_n: n\in\omega}}^Q$.
Then the quiver $Q$ is a non-rooted quiver, 
and the quiver $P$ is different from $Q$,
in fact, $P_0 =\set{v_n:n\in\omega}$.
For another example, 
let $Q$ be the following quiver 
%\[
%\xymatrix@C=20pt@R=5pt{
%& w_0 \ar@/_3pt/[dl] && w_2 \ar@/_3pt/[dl] &&
%w_5 \ar@/_3pt/[dl] &
%\\
%v_0
%& v_1 \ar[l]
%& v_2 \ar[l] \ar@/_3pt/[ul]
%& v_3 \ar[l] \ar@/^3pt/[dl]
%& v_4 \ar[l] \ar@/_3pt/[ul]
%& v_5 \ar[l]  \ar@/^3pt/[dl]
%& \cdots \text{ ,} \ar[l] \ar@/_3pt/[ul]
%\\
%&& w_1 \ar@/^3pt/[ul] && w_4 \ar@/^3pt/[ul] &
%}
%\]
\[
\xymatrix@C=15pt@R=15pt{
&  w_1  & w_2  & w_3 & w_4 
& \ 
\\
v_0
& v_1 \ar[l] \ar[u]
& v_2 \ar[l] \ar@<-0.7ex>[u] \ar@<0.7ex>[u]
& v_3 \ar[l] \ar[u]
& v_4 \ar[l] \ar@<-0.7ex>[u] \ar@<0.7ex>[u]
& \cdots \ar[l]  
}
\]
and let $P:= \overline{\set{v_n: n\in\omega}}^Q$.
Then the quiver $P$ is equal to the quiver $Q$ in this case,
and 
$P_0 \setminus \set{v_n:n\in\omega} = \set{w_n:n\in\omega} $.

\begin{proof}
Let $P:= \overline{\set{v_n: n\in\omega}}^Q$.
By {\bfsc Proposition \ref{closed quiver}},
it suffices to show that
${\rm Ext}^1_{KP}(G(\cX\restriction P),KP)\neq 0$.
Since 
\[
KP = 
\left( \bigoplus_{n\in \omega} e_{v_n} KP \right)
\oplus
\left( \bigoplus_{v\in P_0 \setminus \set{v_n:n\in\omega}} 
e_v KP \right)
\]
and 
\[
{\rm Ext}^1_{KQ}(M,N_0 \oplus N_1)=
{\rm Ext}^1_{KQ}(M,N_0) \oplus {\rm Ext}^1_{KQ}(M,N_1)
\]
in general,
it suffices to show that
${\rm Ext}^1_{KP}(G(\cX\restriction P), 
\displaystyle \bigoplus_{n\in\omega} e_{v_n} KP )\neq 0$. 

For each $n\in\omega$,
let 
\[
d_n := \max_{i\in\omega}
\left| \set{p:\text{ $p$ is a path from $v_i$ to $v_n$ on $P$}} \right|
,
\]
when such maximum exists as a finite number, or
$d_n:=\infty$ otherwise.
Notice that,
for each $n\in\omega$,
the dimension of the $K$-vector space $F(e_{v_n} KQ)_{v_0}$
is equal the number of paths from $v_n$ to $v_0$.
So, 
if infinitely many $d_n$ were larger than $1$,
then the dimension of $\cX_{v_0}$ had to be infinite.
Thus, 
for all but finitely many $n\in\omega$,
$d_n=1$.
Therefore,
without loss of generality 
we may assume that, 
for every $n\in\omega$, $d_n=1$.
Hence
there is a $d\in\omega\setminus\{0\}$
such that,
for any $n\in\omega$, 
\[
\cX\restriction \overline{\set{v_n}}^P
%$
%is isomorphic to 
%$
=
\displaystyle
\bigoplus_{d} F(e_{v_n}KP)
,
\]
where the last term is 
the outer direct sum
of $d$ many copies of
$F(e_{v_n}KP)$.
(Notice that
$d$ is the dimension of $\cX_{v_n}$.)
For each $n\in\omega$, 
let $a_n$ be the unique arrow from $v_{n+1}$ to $v_n$,
and,
for each $v\in P_{0}$,
let
\[
m(v):=\min\set{m\in\omega : 
\text{ there is a path from $v_m$ to $v$}}
.
\]
Then,
for any $v\in P_{0}$ and 
$n\geq m(v)$,
any path from $v_n$ to $v$ is of the form
$a_{n-1}\cdots a_{m(v)} p'$,
for some path $p'$ from $v_{m(v)}$ to $v$ in $P$.
Thus 
\[
\cX\restriction P
%$
%is isomorphic to 
%$
=
\displaystyle \bigoplus_{d} \cX^0
,
\]
where $\cX^0$ is the $K$-linear representation of $P$
such that:
for each $v\in P_0$,
$\cX^0_v$ is the $K$-vector space with basis
the set of all paths from $v_{m(v)}$ to $v$;
for each $n\in\omega$,
$\cX^0_{a_n}$ is the $K$-linear map from 
$\cX^0_{v_{n+1}}$ onto $\cX^0_{v_n}$
such that
$\cX^0_{a_n}(e_{v_{n+1}})=e_{v_n}$;
and,
for each $a\in P_1\setminus\set{a_n:n\in\omega}$,
$\cX^0_a$ is the $K$-linear map from 
$\cX^0_{s(a)}$ onto $\cX^0_{t(a)}$
such that,
for each path $p$ from $v_{m(s(a))}$ to $s(a)$,
$\cX^0_{a}(p):= pa$.
Since 
\[
{\rm Ext}^1_{KP}(\bigoplus_{i\in I}M_i,KP)=
\prod_{i\in I}{\rm Ext}^1_{KP}(M_i,KP)
\]
in general,
it suffices to show that
${\rm Ext}^1_{KP}(G(\cX^0), 
\displaystyle \bigoplus_{n\in\omega} e_{v_n} KP )\neq 0$.

To see this,
let $\pi$ be the canonical $KP$-epimorphism
from $\displaystyle \bigoplus_{n\in\omega} e_{v_n} KP$ onto $G(\cX^0)$ 
such that,
for each $n\in \omega$,
$\pi(e_{v_n}):= e_{v_n}$,
and,
for each $v\in P_0$ and each path $p$ in $P$ ending in $v$ of the form
$
p= a_{n-1} \cdots a_{m(v)} p'
$,
%where $p'$ is 
%either a stationary path,
%or
%a non-stationary path 
%whose first arrow is different from $a_{n-1}$,
$\pi(p):=p'$.
Then
${\rm Ker}(\pi)$ is the $KP$-submodule
of $\displaystyle \bigoplus_{n\in\omega} e_{v_n} KP$
which is generated by the
set
\[
%\begin{array}{r@{\,}l}
%A:= &
\set{e_{v_m}- a_n \cdots a_m :
m,n\in\omega, m \leq n}
%\\[0.5em]
%& \cup
%\set{ p :
%v\in Q_0\setminus P_0, 
%\text{ $p$ is a path of $Q$ from $v$}}
.
%\end{array}
\]
Applying ${\rm Hom}_{KP}(-,KP)$ to the
exact sequence
\[
\xymatrix@C=10pt{\displaystyle
0 \ar[r] &
{\rm Ker}(\pi) \ar[rrr]^-{{\rm id}_{{\rm Ker}(\pi)}}
&&& {\displaystyle \bigoplus_{n\in\omega} e_{v_n} KP} \ar[r]
& G(\cX^0) \ar[r]
& 0
},
\]
we obtain the exact sequence
\begin{multline*}
\xymatrix@C=10pt{
0 \ar[r] &
{\rm Hom}_{KP}( G(\cX^0) ,KP) \ar[r]
& \ }
\\
\xymatrix@C=10pt{{\rm Hom}_{KP}(
{\displaystyle \bigoplus_{n\in\omega} e_{v_n} KP} ,KP ) 
\ar[rrrrrr]^-{ {\rm Hom}_{KP}({\rm id}_{{\rm Ker}(\pi)},KP)}
&&&&&& {\rm Hom}_{KP}({\rm Ker}(\pi),KP)
}.
\end{multline*}
Then
\[
{\rm Ext}^1_{KP}( G(\cX^0) , 
{\displaystyle \bigoplus_{n\in\omega} e_{v_n} KP} )
=
{\rm Hom}_{KP}({\rm Ker}(\pi),KP ) 
\Big/
{\rm Im}( {\rm Hom}_{KP}({\rm id}_{{\rm Ker}(\pi)},KP) )
.
\]

For each non-stationary path $b_{n}\cdots b_0$ of $P$,
we fix 
the notation $b_{i}\cdots b_0$ by induction on 
$i\leq n$
in such a way that
\[
b_0 \cdots b_0 :=b_0
\]
and, for $i\leq n$, 
\[
b_{i+1}\cdots b_0 := b_{i+1} b_{i}\cdots b_0
.
\]
For each $m,n\in \omega$ with $m\leq n$,
%and 
%every path $p=a_{n}\cdots a_m$ of $Q$ from $v_{n+1}$ to $v_m$,
define
\[
\varphi(e_{v_m}- a_n \cdots a_m)
:= \sum_{i=0}^{n-m} a_{m+i} \cdots a_m
.
\]
%and,
%for each $v\in Q_0\setminus P_0$ 
%and any path $p$ of $Q$
%from $v$,
%define 
%\[
%\varphi(p)
%= 0_{KQ}
%.
%\]
Then,
for each $l,m,n\in\omega$
with $l<m\leq n$,
%and
%two paths 
%$p = a_{n}\cdots a_{m}$ from $v_{n+1}$ to $v_m$
%and 
%$q = a_{m-1}\cdots a_l$ from $v_m$ to $v_l$,
\[
\begin{array}{r@{\ = \ }l}
\varphi(e_{v_{m}}- a_{n} \cdots a_m) a_{m-1} \cdots a_l
& \displaystyle
\left( \sum_{i=0}^{n-m} a_{m+i} \cdots a_m \right)
a_{m-1} \cdots a_l
\\[2em]
& \displaystyle
\left( \sum_{i=0}^{n-l} a_{l+i} \cdots a_l \right)
-
\left( \sum_{i=0}^{m-1-l} a_{l+i} \cdots a_l \right)
\\[2em]
&
\varphi(e_{v_l}- a_{n} \cdots a_l)
-
\varphi(e_{v_l}- a_{m-1} \cdots a_l)
%\\[1em]
%=
%\varphi(\big(e_{v_l}- a_m \cdots a_l \big)
%-
%\big( e_{v_l}- a_{n-1} \cdots a_l \big) )
\end{array}
\]
and
\[
\begin{array}{r@{\ = \ }l}
(e_{v_{m}}- a_{n} \cdots a_m) a_{m-1} \cdots a_l
& a_{m-1} \cdots a_l - a_{n} \cdots a_l
\\[1em]
& 
\left( e_{v_l} - a_{n} \cdots a_l \right)
-
\left( e_{v_l} - a_{m-1} \cdots a_l \right)
.
\end{array}
\]
Thus, we can extend $\varphi$ to 
a $KP$-homomorphism from ${\rm Ker}(\pi)$ into $KP$.
To finish the proof, 
it is sufficient to show that 
$\varphi$ is not in 
${\rm Im}( {\rm Hom}_{KP}({\rm id}_{{\rm Ker}(\pi)},KP) )$.

Assume it is, and let 
$\psi \in {\rm Hom}_{KP}(
\displaystyle \bigoplus_{n\in\omega} e_{v_n} KP, KP)$
be
such that 
\[
\varphi
=
{\rm Hom}_{KP}({\rm id}_{{\rm Ker}(\pi)},KP) (\psi)
=
\psi\restriction {\rm Ker}(\pi)
.
\]
For each $n\in\omega$,
\[
\begin{array}{r@{\ = \ }l}
\psi(e_{v_0}) - \psi(e_{v_{n+2}})a_{n+1} \cdots a_0
& \psi(e_{v_0}) - \psi( a_{n+1} \cdots a_0)
\\[1em]
& \psi(e_{v_0} -  a_{n+1} \cdots a_0)
\\[1em]
& \displaystyle \sum_{i=0}^{n+1} a_i \cdots a_0.
\end{array}
\]
Therefore, 
for {\em every} $n\in \omega$,
$\psi(e_{v_0})$ belongs to the set
\[
\left(\sum_{i=0}^{n} a_i \cdots a_0\right)
+ 
KP \left(a_{n+1} \cdots a_0\right)
.
\]
However, this is a contradiction
because 
$\psi(e_{v_0})$ 
have to belong to $KP$.
\end{proof}

\begin{thm}
\label{fqa cor}
Suppose 
that $K$ is an algebraically closed field,
and
$Q$ is a connected quiver such that,
for any finite subset $P_0'$ of $Q_0$,
the closure of $P_0'$ under $Q$ is a finite acyclic quiver.
Then
${\sf P}_{KQ}({\rm rep}_K Q)$.
\end{thm}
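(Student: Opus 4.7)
The plan is to reduce the hypothesis on $\cX$ to the finite acyclic case handled by Corollary \ref{fqa}, and then transfer projectivity back to $KQ$ via the extension functor $T$ from Proposition \ref{closed quiver}. Fix $\cX\in{\rm rep}_K Q$ with ${\rm Ext}^1_{KQ}(G(\cX),KQ)=0$. Since $\cX$ is finite dimensional, the set $V:=\{v\in Q_0:\cX_v\neq 0\}$ is finite, so by hypothesis the closure $P:=\overline{V}^Q$ is a finite acyclic subquiver of $Q$, and $V\subseteq P_0$.

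First, apply Proposition \ref{closed quiver} to $P$ to obtain ${\rm Ext}^1_{KP}(G(\cX\restriction P),KP)=0$. Since $P$ is a finite acyclic quiver, $KP$ is a finite dimensional $K$-algebra of finite global dimension, so Corollary \ref{fqa} yields that $G(\cX\restriction P)$ is a projective $KP$-module. Via the equivalence between ${\rm Mod}(KP)$ and ${\rm Rep}_K P$, this says that $\cX\restriction P$ is a direct summand, in ${\rm Rep}_K P$, of some finite direct sum $\bigoplus_{i=1}^{n}F(e_{v_i}KP)$ with $v_i\in P_0$.

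Next, transport this back to $Q$ using the functor $T\colon{\rm Rep}_K P\to{\rm Rep}_K Q$ from the proof of Proposition \ref{closed quiver}. Because $\cX_v=0$ for every $v\in Q_0\setminus P_0$ (as $V\subseteq P_0$), and because every arrow $a\in Q_1\setminus P_1$ has $s(a)\notin P_0$ and hence $\cX_a=0$, we have the identification $\cX=T(\cX\restriction P)$. Since $T$ is additive (in fact exact), it preserves direct summands, so $\cX$ is a direct summand of $\bigoplus_{i=1}^{n}T(F(e_{v_i}KP))$. The closedness of $P$ under $Q$, together with $v_i\in P_0$, forces every path in $Q$ starting at $v_i$ to be a path in $P$, and hence $T(F(e_{v_i}KP))=F(e_{v_i}KQ)$ as $K$-linear representations of $Q$. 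Applying $G$, we conclude that $G(\cX)$ is a direct summand of $\bigoplus_{i=1}^{n} e_{v_i}KQ$.

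Finally, for each $v_i$ the subquiver $\overline{\{v_i\}}^Q\subseteq P$ is finite acyclic, so Proposition \ref{proj rep} shows that $e_{v_i}KQ$ is a projective $KQ$-module. A finite direct sum of projectives is projective, and a direct summand of a projective is projective, whence $G(\cX)$ is projective, proving ${\sf P}_{KQ}({\rm rep}_K Q)$. The main obstacle is the bookkeeping in the previous paragraph: one must carefully verify the two identifications $\cX=T(\cX\restriction P)$ and $T(F(e_v KP))=F(e_vKQ)$, both of which rely on the combination of $V\subseteq P_0$ (so $\cX$ is supported on $P$) with the closedness of $P$ under $Q$ (so no path out of a vertex of $P_0$ ever escapes). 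Once these are in place, the proof is simply the concatenation of Proposition \ref{closed quiver}, Corollary \ref{fqa}, and Proposition \ref{proj rep}.
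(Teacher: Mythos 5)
There is a genuine gap, and it occurs in your very first step. You claim that ``since $\cX$ is finite dimensional, the set $V:=\{v\in Q_0:\cX_v\neq 0\}$ is finite.'' Under the paper's definition this is false: a $K$-linear representation is called \emph{finite dimensional} when each individual $\cX_v$ is a finite dimensional $K$-vector space, not when the total dimension $\sum_v \dim_K\cX_v$ is finite. So an object of ${\rm rep}_K Q$ may be supported on infinitely many vertices. For example, on the $A_\infty$ quiver $v_0 \leftarrow v_1 \leftarrow v_2 \leftarrow \cdots$ the representation with $\cX_{v_n}=K$ at every vertex and all maps the identity lies in ${\rm rep}_K Q$, is not projective, and is precisely the kind of object the theorem must rule out. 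Your argument never sees this case, because once $V$ is assumed finite the whole difficulty disappears.

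The infinitely-supported case is in fact the heart of the paper's proof and the reason Proposition \ref{ext not vanish} exists. The paper first reduces to $\cX$ indecomposable, shows via Proposition \ref{closed quiver} and ${\sf P}_{KP}({\rm mod}KP)$ that every restriction to a finite closed subquiver is a finite direct sum of representations $F(e_vKQ)$, and then splits into two cases: either $\cX\cong F(e_vKQ)$ for a single $v$, or the support of $\cX$ propagates along an infinite chain of $A_\infty$ type, in which case Proposition \ref{ext not vanish} contradicts ${\rm Ext}^1_{KQ}(G(\cX),KQ)=0$. Your remaining steps (transport along $T$, the identifications $\cX=T(\cX\restriction P)$ and $T(F(e_vKP))=F(e_vKQ)$, and the appeal to Proposition \ref{proj rep}) are correct and give a clean treatment of the finite-support case, but as written the proof establishes the theorem only for representations of finite total dimension, not for all of ${\rm rep}_K Q$.
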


For example,
the following quivers satisfy the assumption of the theorem:
\[
\xymatrix@C=15pt{
\circ
& \circ \ar[l] 
& \circ \ar@<-0.7ex>[l] \ar@<0.7ex>[l]
& \circ \ar[l]
& \circ \ar@<-0.7ex>[l] \ar@<0.7ex>[l]
& \circ \ar[l]
& \circ \ar@<-0.7ex>[l] \ar@<0.7ex>[l]
& \cdots \ar[l]
},
\]
\[
\xymatrix@C=15pt@R=0pt{
& \circ \ar@/_3pt/[dl] &&& \circ \ar@/_3pt/[dl] &&&
\circ \ar@/_3pt/[dl] 
\\
\circ 
& \circ \ar[l]
& \circ \ar[l] \ar@/_3pt/[ul]
& \circ \ar[l] \ar@/^3pt/[dl]
& \circ \ar[l] 
& \circ \ar[l]  \ar@/_3pt/[ul]
& \circ \ar[l] \ar@/^3pt/[dl]
& \circ \ar[l]
& \cdots \text{ ,} \ar[l] \ar@/_3pt/[ul]
\\
&& \circ \ar@/^3pt/[ul] &&& \circ \ar@/^3pt/[ul] 
&&& \ \ \ \ \ \ar@/^3pt/[ul]
}
\]
\[
\xymatrix@C=15pt@R=5pt{
\circ %\ar@{.}[u]
\\
\circ & \circ \ar[lu] & & \cdots 
\\
\circ & \circ \ar[lu] & \circ \ar[lu] & \cdots 
\\
\circ & \circ \ar[l] \ar[lu] & \circ \ar[l] \ar[lu]  
& \circ \ar[l] \ar[lu] &\cdots  \text{ ,}\ar[l] 
}
\]
\[
\xymatrix@C=15pt@R=0pt{
&&&&& \circ \ar[dl] & \circ \ar[l] & \circ \ar[l] & \cdots \ar[l]
\\
&& \circ \ar[ddl] & \circ \ar[l] & \circ \ar[l] 
\\
&&&&& \circ \ar[ul] & \circ \ar[l] & \circ \ar[l] & \cdots \ar[l]
\\
\circ & \circ \ar[l] & 
\\
&&&&& \circ \ar[dl] & \circ \ar[l] & \circ \ar[l] & \cdots \ar[l]
\\
&& \circ \ar[uul] & \circ \ar[l] & \circ \ar[l] 
\\
&&&&& \circ \ar[ul] & \circ \ar[l] & \circ \ar[l] & \cdots 
\text{ .} \ar[l]
}
\]
Note that any infinite quiver as in
the assumption of the theorem
%{\bfsc Theorem \ref{fqa cor}}
contains at least one of the following quivers as a subquiver:
\[
%\text{\raisebox{-1em}{
\xymatrix@C=15pt{
\circ
& \circ \ar[l] 
& \circ \ar[l] 
& \cdots \ar[l]
& \circ \ar[l]
& \circ \ar[l]
& \cdots \ar[l]
}\text{ , }
%}}
\]
\[
\xymatrix@C=15pt@R=15pt{
&&& \circ & &&&
\\
\circ \ar[urrr]%^-{a_0} 
& \circ \ar[urr]%_-{a_1} %\ar[l]^-{b_1}
&  \circ \ar[ur]%_-{a_2} %\ar[l]^-{b_2}
& \cdots %\ar[l]^-{b_3}
%& \cdots 
& \circ \ar[ul]%_-{a_n} %\ar[l]^-{b_n}
& \circ \ar[ull]%_-{a_n} %\ar[l]^-{b_n}
& \cdots \cdots \text{ ,} %\ar[l]^-{b_{n+1}} 
}
\]
\[
\xymatrix@C=10pt@R=3pt{
&&&& \circ  && && && \circ 
\\
&&& \circ \ar[ur] && \circ \ar[lu] && && \circ \ar[ur]  && \circ \ar[ul]
&&&& \cdots \cdots
\\
&& \circ \ar@{.>}[ur] && && \circ \ar@{.>}[ul] && \circ \ar@{.>}[ur] &&& & \circ \ar@{.>}[ul] && &\cdots \cdots
\\
&\circ \ar[ur] &&& &&& \circ \ar[ur] \ar[ul] &&& &&& \circ \ar[ur] \ar[ul]
&&& \text{.}
}
\]

\begin{proof}
This theorem has been proved when $Q$ is a finite quiver
in {\bfsc Corollary \ref{fqa}}.
Suppose that
$Q$ is an infinite quiver,
and 
$\cX$ is a finite dimensional $K$-linear
representation of $Q$
such that
${\rm Ext}^1_{KQ}( G(\cX) ,KQ)=0$.
We show that $\cX$ is projective.
Since 
\[
{\rm Ext}^1_{KQ}(\bigoplus_{i\in I}M_i,KQ)=
\prod_{i\in I}{\rm Ext}^1_{KQ}(M_i,KQ)
\]
in general,
without loss of generality
we may assume that
$\cX$ is indecomposable.

By {\bfsc Proposition \ref{closed quiver}},
%for every finite subset $P_0'$ of $Q_0$,
%letting $P=(P_0,P_1)$ be the closure of $P_0'$ under $Q$,
for every finite closed subquiver $P$ of $Q$,
${\rm Ext}^1_{KP}( G(\cX\restriction P) ,KP)=0$.
Therefore,
by ${\sf P}_{KP}({\rm mod}KP)$,
$\cX\restriction P$ is projective.
It is known that
any indecomposable projective $KP$-module is of the form
$e_v KP$ for some $v\in P_0$
\cite[\S III.2]{ASS}.
Since $P$ is a closed subquiver of $Q$,
the underlying set of $e_v KP$ is equal to $e_v KQ$,
and 
$F(e_v KQ)\restriction P = F(e_v KP)$.
So, 
since $\cX$ is finite dimensional, 
$\cX\restriction P$ is isomorphic to a direct sum of 
finitely many $K$-linear representations
of the form $F(e_v KQ)$
for $v\in P_0$.
Therefore,
since $\cX$ is indecomposable, 
only one of the following statements hold:
\begin{enumerate}
\item
$\cX$ is isomorphic to 
$F(e_v KQ)$ for some $v\in Q_0$,
or

\item
$Q$ contains the quiver 
\[
\xymatrix@C=15pt{
v_0
& v_1 \ar[l] 
& v_2 \ar[l]
& \cdots \ar[l]
& v_n \ar[l]
& v_{n+1} \ar[l]
& \cdots \ar[l]
}
\]
as a subquiver
such that, 
for each $n\in\omega$,
$\cX_{v_n}\neq\{0_K\}$,
and
$\cX\restriction \overline{\set{v_n}}^Q$ is a direct sum
%of $l_n$ many copies of $F(e_{v_n}KQ)$,
%where $l_n$ is the number of the arrows from $v_{n+1}$ to $v_n$ in $Q_1$,
of finitely many copies of $F(e_{v_n}KQ)$.
%for each $v\in Q_0$,
%if $v$ does not belong to any quiver 
%$\overline{\set{v_n}}^Q$, $n\in\omega$,
%then $\cX_v=\{0_K\}$.
\end{enumerate}
By {\bfsc Proposition \ref{ext not vanish}}
and the assumption that ${\rm Ext}^1_{KQ}( G(\cX) ,KQ)=0$,
$\cX$ is isomorphic to 
$F(e_v KQ)$ for some $v\in Q_0$. 
By our assumption, %\marginpar{Add an explanation insted of `old Prop 1.9'.}
$ \overline{\{v\}}^Q$ is a finite acyclic quiver,
so
$e_v$ is an idempotent of $KQ$.
Hence
$e_v KQ$ is projective,
so is $\cX$.
\end{proof}

\subsection{Martin's Axiom}

Martin's Axiom was introduced by Martin and Solovay
\cite{MartinSolovay}.
This axiom cannot be neither proved nor refuted 
from axiomatic set theory
$\ZFC$, 
so it is consistent with $\ZFC$.
Martin's Axiom can be considered as 
a generalization of the Baire category theorem
(see e.g. \cite[Theorem III.4.7]{Kunen:new}).
$\MA_{\aleph_1}$ denotes
Martin's Axiom for $\aleph_1$ many dense sets.
In this paper
we use 
{\sf UP}\footnote{This notation follows \cite{EM}
but it is not  that common in set theory.
},
which is one combinatorial consequence from $\MA_{\aleph_1}$.

\begin{defn}
\begin{enumerate}
%\item
%In this paper,
%we adopt {\em ordinals} as the von Neumman ordinals,
%that is, 
%an ordinal $\alpha$ 
%means the set of ordinals less than $\alpha$.
%So for ordinals $\alpha$ and $\beta$,
%$\alpha$ is less than $\beta$ iff $\alpha\in\beta$.
%$\omega$ is the set of all finite ordinals
%(non-negative integers),
%$\omega_1$ is the least uncountable ordinal 
%(which is a cardinal).
%$\Lim$ denotes the class of all limit ordinals.

\item
A {\em ladder system (on $\omega_1$)}
is 
a sequence $\seq{C_\alpha:\alpha\in\omega_1\cap \Lim}$ 
such that
\begin{itemize}
\item
for each $\alpha\in\omega_1\cap \Lim$,
$C_\alpha$ is a cofinal subset of $\alpha$,
that is,
\[
\forall \xi \in\alpha\exists \eta\in C_\alpha
\big( \xi\in \eta\big)
,
\]
and

\item
$C_\alpha$ is of order type $\omega$,
that is,
the elements of $C_\alpha$ can be enumerated
as $\set{\zeta^\alpha_n:n\in\omega}$ increasingly,
that is,
for every $m,n\in\omega$, 
if $m\in n$, then 
$\zeta^\alpha_m \in \zeta^\alpha_n$.
\end{itemize}

\item
A {\em coloring} $\seq{d_\alpha:\alpha\in\omega_1\cap \Lim}$
of a ladder system 
$\seq{C_\alpha:\alpha\in\omega_1\cap \Lim}$
is a sequence of functions 
such that 
the domain of each $d_\alpha$ is $C_\alpha$.

\item
We say that
a function $f$ with domain $\omega_1$ 
{\em uniformizes} 
a coloring 
$\langle d_\alpha:\alpha\in\omega_1\cap \Lim \rangle$
of a ladder system 
$\seq{C_\alpha:\alpha\in\omega_1\cap \Lim}$,
$C_\alpha=\set{\zeta^\alpha_n:n\in\omega}$,
if
for every $\alpha\in\omega_1\cap \Lim$,
the restricted function 
$f\restriction C_\alpha$ 
of $f$ by $C_\alpha$ 
is equal to 
the function $d_\alpha$
for all but finitely many points,
that is,
there exists an $N\in\omega$
such that, 
for any $n\in\omega\setminus N$,
$f(\zeta^\alpha_n)=d_\alpha(\zeta^\alpha_n)$.

\item
The assertion {\sf UP} 
means that,
for any 
%ladder system 
%$\seq{C_\alpha:\alpha\in\omega_1\cap \Lim}$, 
%sequence $\seq{X_\beta : \beta\in\omega_1}$
%of countable sets,
%and 
%coloring $\seq{d_\alpha:\alpha\in\omega_1\cap \Lim}$
%of $\seq{C_\alpha:\alpha\in\omega_1\cap \Lim}$,
sequence $\seq{X_\beta : \beta\in\omega_1}$
of countable sets
and 
any coloring $\seq{d_\alpha:\alpha\in\omega_1\cap \Lim}$
of a ladder system
$\seq{C_\alpha:\alpha\in\omega_1\cap \Lim}$,
whenever
$d_\alpha(\zeta^\alpha_n)$ belongs to $X_{\zeta^\alpha_n}$
for any $\alpha\in\omega_1\cap \Lim$
and $n\in\omega$,
there exists a function with domain $\omega_1$
which uniformizes the coloring 
$\seq{d_\alpha:\alpha\in\omega_1\cap \Lim}$.

\end{enumerate}
\end{defn}

%%The following is all of what we need about Matin's Axiom.
%For the proof of the following result, 
%see e.g.
%\cite[Proposition VI.4.6]{EM}
%or
%\cite[\S3]{Yorioka:MArec}.

\begin{thm}[Devlin-Shelah \text{\cite[Theorem 5.2]{DevlinShelah: weak diamond}}]
$\MA_{\aleph_1}$
implies {\sf UP}.
\end{thm}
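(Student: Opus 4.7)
The plan is to realize the uniformizing function as the union of sufficiently generic finite approximations, the genericity coming from $\MA_{\aleph_1}$ applied to a suitable ccc poset. Define $\PP$ whose conditions are triples $p=(s_p,F_p,n_p)$ in which $s_p$ is a finite partial function with $\dom(s_p)\subseteq\omega_1$ and $s_p(\beta)\in X_\beta$ for every $\beta\in\dom(s_p)$, $F_p$ is a finite subset of $\omega_1\cap\Lim$, $n_p:F_p\to\omega$, and the commitment $s_p(\zeta^\alpha_n)=d_\alpha(\zeta^\alpha_n)$ holds whenever $\alpha\in F_p$, $n\geq n_p(\alpha)$, and $\zeta^\alpha_n\in\dom(s_p)$. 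Order by $q\leq p$ iff $s_p\subseteq s_q$, $F_p\subseteq F_q$, and $n_q\restrictedto F_p=n_p$. Set $D_\beta:=\{p\in\PP:\beta\in\dom(s_p)\}$ and $E_\alpha:=\{p\in\PP:\alpha\in F_p\}$; a filter $G\subseteq\PP$ meeting all $\aleph_1$ many of these sets yields $f:=\bigcup_{p\in G}s_p$, and for any $\alpha$ and any $p\in G\cap E_\alpha$ the tail $\{\zeta^\alpha_n:n\geq n_p(\alpha)\}$ is forced to follow $d_\alpha$, which is exactly the required uniformization.

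Both families of dense sets rely on the elementary observation that $C_\alpha\cap C_{\alpha'}$ is finite whenever $\alpha\neq\alpha'$: assuming $\alpha<\alpha'$, the set $C_{\alpha'}$ has order type $\omega$ and is cofinal in $\alpha'>\alpha$, so only finitely many of its members fall below $\alpha$. To extend $p$ into $E_\alpha$ I would choose $n_p(\alpha)$ so large that the tail $\{\zeta^\alpha_n:n\geq n_p(\alpha)\}$ avoids both the finite set $\dom(s_p)$ and the finite set $\bigcup_{\alpha'\in F_p}(C_{\alpha'}\cap C_\alpha)$. Maintaining this choice throughout guarantees that at any $\beta$ at most one active commitment constrains $s_p(\beta)$, and hence $D_\beta$ is dense: take $s_p(\beta):=d_\alpha(\beta)$ if some commitment applies, and an arbitrary element of $X_\beta$ otherwise.

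The main technical obstacle is verifying that $\PP$ is ccc so that $\MA_{\aleph_1}$ is applicable. Given a putative uncountable antichain $\{p_\xi:\xi\in\omega_1\}$, I would apply the $\Delta$-system lemma to the finite sets $A_\xi:=\dom(s_{p_\xi})\cup F_{p_\xi}$ to extract an uncountable $I$ whose members share a common root $R$, and then thin $I$ so that the data attached to $R$ is constant in $\xi\in I$ and the new pieces $B_\xi:=A_\xi\setminus R$ form a staircase $\sup B_\xi<\min B_\eta$ for $\xi<\eta$ in $I$. Gluing $p_\xi$ and $p_\eta$ ($\xi<\eta$) to $q:=(s_{p_\xi}\cup s_{p_\eta},F_{p_\xi}\cup F_{p_\eta},n_{p_\xi}\cup n_{p_\eta})$, the commitments of any $\alpha\in F_{p_\xi}\setminus R$ impose nothing on $s_{p_\eta}$, since $C_\alpha\subseteq\alpha\leq\sup B_\xi<\min B_\eta$ places every $\zeta^\alpha_n$ outside $B_\eta$. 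The delicate reverse case is $\alpha\in F_{p_\eta}\setminus R$ whose tail can dip into $B_\xi$; here a further pigeonhole thinning that fixes the order type of $B_\xi$, the constant integer values $n_{p_\xi}\restrictedto(F_{p_\xi}\cap B_\xi)$, and---using the countability of each $X_\beta$---the isomorphism type of $s_{p_\xi}\restrictedto B_\xi$ should produce an uncountable subfamily on which every would-be conflict $\zeta^\alpha_n\in B_\xi$ with $n\geq n_{p_\eta}(\alpha)$ is averted, ensuring uncountably many compatible pairs. Once $\PP$ is shown to be ccc, $\MA_{\aleph_1}$ applied to $\{D_\beta:\beta\in\omega_1\}\cup\{E_\alpha:\alpha\in\omega_1\cap\Lim\}$ yields the desired filter $G$, and the union of its $s_p$'s is the sought uniformization.
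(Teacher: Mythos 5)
First, a remark on scope: the paper does not prove this theorem at all --- it quotes it from Devlin--Shelah and refers the reader to \cite{EM} and \cite{Yorioka:MArec} for proofs --- so your attempt has to be judged on its own terms rather than against an argument in the text. Your overall strategy (a ccc poset of finite approximations carrying finitely many tail commitments, then $\MA_{\aleph_1}$) is indeed the standard one, but there are two genuine gaps at exactly the two load-bearing points.

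The first concerns the density of $D_\beta$. The poset $\PP$ as you defined it contains conditions $p$ in which two limits $\alpha\neq\alpha'\in F_p$ both have some $\beta=\zeta^\alpha_n=\zeta^{\alpha'}_{n'}$ in their active tails (i.e.\ $n\geq n_p(\alpha)$ and $n'\geq n_p(\alpha')$) with $d_\alpha(\beta)\neq d_{\alpha'}(\beta)$ and $\beta\notin\dom(s_p)$. No extension of such a $p$ can ever put $\beta$ into its domain, so $D_\beta$ is not dense below $p$ and $\MA_{\aleph_1}$ is not applicable to it. Choosing $n_p(\alpha)$ large when you pass into $E_\alpha$ only shows that \emph{well-built} conditions avoid this; density must be verified below \emph{every} condition of the poset you actually defined. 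Nor can you shrug off the value of $f$ at such a $\beta$: if infinitely many points of one fixed $C_\alpha$ become ``doubly promised against'' as the filter grows, then $f\restriction C_\alpha$ differs from $d_\alpha$ infinitely often and uniformization fails. So the conflict has to be outlawed in the definition of a condition (say, by requiring that active tails agree wherever they meet), and that changes the poset whose ccc-ness you then owe.

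The second gap is the ccc argument at the case you yourself flag as delicate. For $\alpha\in F_{p_\eta}\setminus R$, whether gluing fails depends on \emph{where} the countable set $\bigcup_{\alpha\in F_{p_\eta}\setminus R}C_\alpha$ meets $\dom(s_{p_\xi})\setminus R$ inside $\omega_1$ and on the actual values of $d_\alpha$ and $s_{p_\xi}$ there; none of this is determined by the order type of $B_\xi$, the integers $n_{p_\xi}\restriction(F_{p_\xi}\cap B_\xi)$, or an ``isomorphism type'' of $s_{p_\xi}\restriction B_\xi$, so the pigeonhole thinning you propose does not avert the conflict. The standard way to close this case is a counting argument instead: fix $\eta$ with uncountably many predecessors in the thinned family; since $\bigcup_{\alpha\in F_{p_\eta}\setminus R}C_\alpha$ is countable and the sets $\dom(s_{p_\xi})\setminus R$ are pairwise disjoint, only countably many $\xi$ can produce a value-versus-commitment conflict with $p_\eta$, and any remaining $\xi$ yields a compatible pair. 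Be warned that after repairing the poset as in the previous paragraph you also acquire commitment-versus-commitment conflicts between $F_{p_\xi}\setminus R$ and $F_{p_\eta}\setminus R$, which are \emph{not} controlled by disjointness (a single ordinal may lie on uncountably many ladders); handling these is where the countability of the sets $X_\beta$ genuinely enters. I suggest consulting \cite[VI.4.6]{EM} or \cite[\S3]{Yorioka:MArec} before patching the sketch.
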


The assertion {\sf UP} was inspired by Shelah's proof
that 
$\MA_{\aleph_1}$ 
implies the existence of
a non-free Whitehead group 
\cite[Theorem 3.5]{Shelah:W}
(see also \cite{E Shelah}).
%To finish this subsection,
%we present the following result which will be used in the proof of
%our main results.
%
%\begin{prop}
%\label{laddr claim}
%For any ladder system 
%$\seq{C_\delta:\delta\in\omega_1\cap \Lim}$,
%$C_\delta=\set{\zeta^\delta_n:n\in\omega}$,
%and $\alpha\in\omega_1\cap \Lim$,
%there exists 
%a sequence $\seq{N_\delta:\delta\in\alpha\cap\Lim}$
%of members of $\omega$
%such that
%the set
%\[
%\Big\{C_\alpha\Big\}
%\cup
%\Big\{
%\set{\zeta^\delta_n:n\geq N_\delta}
%:\delta\in\alpha\cap \Lim
%\Big\}
%\]
%is pairwise disjoint.
%
%\end{prop}
%
%
%\begin{proof}
%We enumerate the set $\alpha\cap \Lim$
%by $\set{\delta_i:i\in \omega}$
%(which is not an increasing enumeration in general).
%By induction on $i\in\omega$,
%we can choose an $N_i\in\omega$ such that
%the set
%$\set{\zeta^{\delta_i}_n:n\geq N_{i}}$
%is disjoint from the set
%\[
%C_\alpha
%\cup
%\bigcup_{j<i}
%\set{\zeta^{\delta_j}_n:n\geq N_{j}}
%.
%\]
%This is because 
%each set
%\[
%C_{\delta_i}
%\cap
%\left(
%C_\alpha
%\cup
%\bigcup_{j<i}
%\set{\zeta^{\delta_j}_n:n\geq N_{j}}
%\right)
%\]
%is finite.
%\end{proof}

%%%%%%%%%%%%%%%%%
\subsection{Trlifaj's construction}
\label{ods}

%Every proof in \S\ref{inf gen pa} is 
%fairly self-contained, 
%so the reader does not need to be familiar with
%Trlifaj's construction.
%To fix our notation and understand 
%our construction better,
%Trlifaj's construction is presented below.

%We follow the notation of outer direct sums
%in \cite[I.2.]{EM}.
%For a family $\set{M_i:i\in I}$ of modules,
%the {\rm product module}
%$\displaystyle \prod_{i\in I} M_i$ is the module
%whose underling set is the set of
%functions $f$ with domain $I$ such that
%for each $i\in I$, 
%$f(i)$ belongs to the set $M_i$,
%and 
%the operations are defined coordinate-wise.
%For a member $f$ of the product $\displaystyle \prod_{i\in I} M_i$,
%the {\em support} ${\rm supp}(f)$ of $f$ is defined by the set
%\[
%\set{i\in I: f(i)\neq 0_{M_i}}
%.
%\]
%The outer direct sum
%$\displaystyle \bigoplus_{i\in I} M_i$
%of a family $\set{M_i:i\in I}$ of modules
%is the submodule of the product
%module $\displaystyle \prod_{i\in I} M_i$ 
%which
%consists of the members of the set
%$\displaystyle \prod_{i\in I} M_i$ 
%whose supports are finite.

In this paper,
our modules are built by modifying
Trlifaj's construction.
%for a sequence 
%%$\seq{F^\xi:\xi\in\omega_1}$
%of modules of length $\omega_1$.
As every proof in \S\ref{inf gen pa} is fairly self-contained,
the reader does not need to be familiar with this construction.
Trlifaj's construction is a quotient module of the outer direct sum 
$\displaystyle \bigoplus_{\xi\in\omega_1} F^\xi$
of some sequence
$\seq{F^\xi:\xi\in\omega_1}$
of modules, defined in 
\cite[Definition 1.1]{Tr ES}
and 
\cite[Notation 5.3]{HT},
which seems to be inspired by 
Shelah's solution of Whitehead Problem
\cite{Shelah:W}.
%(see also \cite{E Shelah}).
To fix our notation and understand our construction better,
Trlifaj's construction is presented as follows.

Let $R$ be a ring
with identity
and 
let 
\[
\xymatrix@C=30pt{
F_0 \ar[r]^-{\text{\normalsize $f_0$}} 
& F_1 \ar[r]^-{\text{\normalsize $f_1$}} 
& \cdots \ar[r]^-{\text{\normalsize $f_{n-1}$}}
& F_n \ar[r]^-{\text{\normalsize $f_n$}} 
& F_{n+1} \ar[r]^-{\text{\normalsize $f_{n+1}$}} 
& \cdots 
}
\]
be a countable direct system of $R$-modules.
Let 
$\seq{C_\alpha:\alpha\in\omega_1\cap\Lim}$
be a ladder system 
such that
\[
C_\alpha=\set{\zeta^\alpha_n:n\in\omega}
\]
is an increasing enumeration
and 
assume that,
for each $n\in\omega$,
$\zeta^\alpha_n$ is of the form $\delta+n+1$
for some $\delta\in\alpha\cap (\{0\}\cup \Lim)$
(there is such a ladder system).
Define $F^0 := \{0_R\}$; 
for each $\gamma\in\omega_1\setminus\Lim$
with $\gamma=\delta+n_\gamma+1$
for some $\delta\in\gamma\cap (\{0\}\cup \Lim)$ and $n_\gamma\in\omega$,
define 
$F^\gamma:=F_{n_\gamma}$; 
and,
for each $\delta\in\omega_1\cap \Lim$,
define
$F^\delta:=
\displaystyle
\bigoplus_{n\in\omega}F_n$.
So, for each member $x$ of 
the outer direct sum 
$\displaystyle \bigoplus_{\xi\in\omega_1} F^\xi$,
$x$ forms a finite support function with domain 
included in $\omega_1$
and,
for each
$\alpha\in\omega_1$,
$x(\alpha)$ belongs to $F^\alpha$.
Hence, if $\delta\in\omega_1\cap \Lim$,
then
$x(\delta)$ belongs to
the outer direct sum
$F^\delta:=
\displaystyle
\bigoplus_{n\in\omega}F_n$,
which also forms a finite support function with domain 
included in $\omega$.
For
each $\delta\in\omega_1\cap \Lim$,
define
the $R$-submodule
\begin{multline*}
G_\delta:=
\left\langle
\bigg\{x\in \bigoplus_{\xi\in\omega_1} F^\xi :
\text{ for some $n\in\omega$, }
\supp(x)=\set{\zeta^\delta_n, \delta},
\right.
\\
\supp(x(\delta))=\set{n, n+1},
x(\zeta^\delta_n)=x(\delta)(n),
\\
\left.
\text{ and }
x(\delta)(n+1)= f_n(x(\delta)(n))
\bigg\}
\right\rangle_{R}
\end{multline*}
of the $R$-module
$\displaystyle \bigoplus_{\xi\in\omega_1} F^\xi$,
and
define
\[
I_{\omega_1}:=\sum_{\delta\in\omega_1\cap\Lim}G_\delta
,
\]
which is an $R$-submodule of the $R$-module 
$\displaystyle \bigoplus_{\xi\in\omega_1} F^\xi$.
Trlifaj's construction
is the quotient $R$-module
$\displaystyle 
\left.\bigoplus_{\xi\in\omega_1} F^\xi
\right/ I_{\omega_1}$
of the $R$-module
$\displaystyle \bigoplus_{\xi\in\omega_1} F^\xi$
by the $R$-submodule $I_{\omega_1}$.
%For each $x\in \displaystyle \bigoplus_{\xi\in\omega_1} F^\xi$,
%$x+ I_{\omega_1}$ denotes the equivalent class of $x$ 
%in this quotient.
%\marginpar{Remove one sentence here.}
Trlifaj applied this construction for a non-left perfect ring 
\cite{Tr ES},
and 
Herbera-Trlifaj applied it to analyze some classes
of modules called Kaplansky classes
or deconstructible classes
\cite{HT}.
For further properties of this module,
see
\cite[\S5]{HT}.

%\S\ref{inf simple} provides 
%an infinitely generated non-projective module
%over a path algebra of the quiver
%of $A_\infty$ type,
%and
%\S\ref{fin} provide 
%an infinitely generated non-projective module
%over a path algebra of the Kronecker quiver.
%Two of them follow the above notation.

%We will notice that
%each $F^\xi$ is constructed by an outer direct sum
%of some family of modules too.
%So for each member $f$ of 
%the module $\displaystyle \bigoplus_{\xi\in\omega} F^\xi$
%and index $\xi\in\omega_1$,
%$f(\xi)$ is also a member of some outer direct sum of modules,
%that is,
%forms a function.

%%%%%%%%%%%%%%%%%%%%%
\section{Some infinitely generated modules of path algebras}
\label{inf gen pa}

%\subsection{Definition of the infinite quiver and its module}

%\footnote{\tt file name: \jobname}

Throughout this section,
we fix a ladder system 
$\seq{C_\alpha:\alpha\in\omega_1\cap\Lim}$
such that
\[
C_\alpha=\set{\zeta^\alpha_n:n\in\omega}
\]
is an increasing enumeration
and,
for each $n\in\omega$,
$\zeta^\alpha_n$ is of the form $\delta+n+1$
for some $\delta\in\alpha\cap( \{0\}\cup \Lim )$.
We note that, 
for any $\alpha,\beta\in\omega_1\cap \Lim$
and $m,n\in\omega$,
if $\zeta^\alpha_m=\zeta^\beta_n$ then 
$m=n$.
For $\gamma\in\omega_1\setminus(\{0\}\cup \Lim)$,
let $n_\gamma\in\omega$ be the unique integer such that
$\gamma=\delta+n_\gamma+1$ 
for some (unique) $\delta\in \omega_1\cap (\{0\}\cup \Lim)$.

For each subsection of this section,
we deal with some quiver $Q$ and 
build a non-projective $KQ$-module $M_{\omega_1}$.
For each quiver $Q$ in each subsection, 
we use the following notation.
For each $v\in Q_0$,
$e_{v}$ denotes the path of length $0$
from the vertex $v$ 
(to itself).
For $\gamma\in\omega_1 \setminus \Lim$,
$\alpha\in\omega_1\cap \Lim$ and $n\in\omega$,
let 
$F^\gamma=F^{\alpha,n}:=KQ$,
%$F^\gamma$ and $F^{\alpha,n}$
%be the $KQ$-module
%$\displaystyle
%\underset{\omega}{\bigoplus} KQ$,
%which is the outer direct sum of $\omega$ many copies of $KQ$,
and 
let 
$F^\alpha$ be the outer direct sum 
$\displaystyle \bigoplus_{n\in\omega} F^{\alpha,n}$.
%of the $KQ$-modules $F^{\alpha,n}$.
For $\gamma\in\omega_1\setminus\Lim$
%$j\in\omega$ 
and 
$v\in Q_0$,
let 
$e^{\gamma}_{v}$
%$e^{\gamma,j}_{v}$
be the member
of the outer direct sum 
$\displaystyle \bigoplus_{\xi\in\omega_1} F^\xi$
of $KQ$-modules 
such that
\[
\supp(e^{\gamma}_{v})=\{\gamma\},
\ \ \ 
e^{\gamma}_{v}(\gamma)=e_{v}
.
\]
%\[
%\supp(e^{\gamma,j}_{v})=\{\gamma\},
%\ \ \ 
%\supp(e^{\gamma,j}_{v}(\gamma))=\{j\},
%\ \ \ 
%e^{\gamma,j}_{v}(\gamma)(j)=e_{v}
%.
%\]
For $\alpha\in\omega_1\cap \Lim$,
$n\in\omega$ 
%$n,j\in\omega$ 
and $v\in Q_0$,
let
$e^{\alpha,n}_{v}
%$e^{\alpha,n,j}_{v}
\in 
\displaystyle \bigoplus_{\xi\in\omega_1} F^\xi$
be
such that
\[
\supp(e^{\alpha,n}_{v})=\{\alpha\},
\ \ \ 
\supp(e^{\alpha,n}_{v}(\alpha))=\{n\},
\ \ \ 
e^{\alpha,n}_{v}(\alpha)(n)=
e_{v}
.
\]
%\[
%\supp(e^{\alpha,n,j}_{v})=\{\alpha\},
%\ \ \ 
%\supp(e^{\alpha,n,j}_{v}(\alpha))=\{n\},
%\]
%\[
%\supp(e^{\alpha,n,j}_{v}(\alpha)(n))=\{j\},
%\ \ \ 
%e^{\alpha,n,j}_{v}(\alpha)(n)(j)=
%e_{v}
%.
%\]

\begin{remark}
\label{direct sum F xi}
The set 
\[
\set{e^{\gamma}_{v} ,
e^{\alpha,n}_{v} :
\gamma\in\omega_1\setminus\Lim,
%j\in\omega,
v\in Q_0,
\alpha\in \omega_1 \cap\Lim,
n\in\omega
}
\]
%\[
%\set{e^{\gamma,j}_{v} ,
%e^{\alpha,n,j}_{v} :
%\gamma\in\omega_1\setminus\Lim,
%j\in\omega,
%v\in Q_0,
%\alpha\in \omega_1 \cap\Lim,
%n\in\omega
%}
%\]
is linearly independent 
with respect to $KQ$
in $\displaystyle \bigoplus_{\xi\in\omega_1} F^\xi$.
\end{remark}

%%%%%%%%%%%%%%%%%%%%%%%%%
\subsection{On a quiver of $A_\infty$ type}
\label{inf simple}
Throughout this subsection,
let 
$K$ be a field, 
and
$Q$ the quiver $A_\infty^{\leftarrow}$ as follows:
\[
\xymatrix@C=30pt{
0 
& 1 \ar[l]_-{\text{\normalsize $a_0$}} 
& \cdots \ar[l]_-{\text{\normalsize $a_1$}} 
& n \ar[l]_-{\text{\normalsize $a_{n-1}$}} 
& n+1 \ar[l]_-{\text{\normalsize $a_n$}} 
& \cdots \ar[l]_-{\text{\normalsize $a_{n+1}$}} 
} \ ,
\]
that is,
the set $Q_0$ of vertices
is the set of all non-negative integers
and 
the set $Q_1$ of arrows
is defined by
\[
\set{
\xymatrix@C=18pt{n & n+1
\ar[l]_-{\text{\normalsize $a_{n}$}}  : n\in\omega}
}
.
\]
Since $Q_0$ is infinite, 
$KQ$ does not have an identity.
By simplifying the notation
in this subsection,
%for each $\gamma\in\omega\setminus\Lim$,
%we denote
%\[
%e^\gamma_{n_\gamma} :=
%e^{\gamma,0}_{n_\gamma}
%,
%\]
%and,
for each $\alpha\in\omega_1\cap\Lim$ and $n\in\omega$,
\[
e^{\alpha}_{n}
:=
e^{\alpha,0}_{n}
.
\]
For each $\alpha\in\omega_1\cap \Lim$,
define
\[
G_\alpha  := 
\left\langle
\left\{
e^{\zeta^\alpha_n}_{n}
-
e^{\alpha}_{n}
+
e^{\alpha}_{n+1}
a_{n}
:
n\in\omega
\right\}\right\rangle_{KQ}
,
\]
\[
I_{\omega_1}:=\sum_{\xi\in\omega_1\cap\Lim}G_\xi .
\]
For each $x\in \displaystyle \bigoplus_{\xi\in\omega_1} F^\xi$,
$x+ I_{\omega_1}$ denotes the equivalence class of $x$ 
in the quotient module 
$\displaystyle
\left.
\displaystyle \bigoplus_{\xi\in\omega_1} F^\xi
\right/I_{\omega_1}$.
For each $\xi\in\omega_1+1$,
define
the $KQ$-module
$M_\xi$ 
by
\[
\left\langle
\set{e^\gamma_{n_\gamma} + I_{\omega_1} :
\gamma\in \xi\setminus\Lim}
\cup
\set{
e^{\alpha}_{n} + I_{\omega_1} :
\alpha\in \xi\cap\Lim,
n\in\omega}
\right\rangle_{KQ}
,
\]
which is considered as a $KQ$-submodule of 
the quotient module
$\displaystyle
\left.
\displaystyle \bigoplus_{\xi\in\omega_1} F^\xi
\right/I_{\omega_1}
$.

%\begin{remark}
%\label{inf ctbl ds}
%For each $\xi\in\omega_1$, 
%$M_\xi$ is countably generated,
%and
%$\seq{M_\xi:\xi\in\omega_1}$
%forms a direct system of $KQ$-modules
%with direct limit $M_{\omega_1}$.
%\end{remark}

\begin{remark}
\label{inf ctbl I basis}
The set 
$\left\{
e^{\zeta^\alpha_n}_{n}
-
e^{\alpha}_{n}
+
e^{\alpha}_{n+1}
a_{n} :
\alpha\in\omega_1\cap \Lim,
n\in\omega
\right\}$
is linearly independent 
with respect to $KQ$
in $\displaystyle \bigoplus_{\xi\in\omega_1} F^\xi$.
%and it generates a direct summand of 
%$\displaystyle \bigoplus_{\xi\in\omega_1} F^\xi$.
\end{remark}

In this paper,
${\displaystyle \bigoplus_{\omega_1} KQ}$
denotes
the outer direct sum of $\omega_1$ many copies of $KQ$,
which is considered as a $KQ$-module.

\begin{claim}
\label{inf ctbl M alpha 1}
${\rm Ext}^1_{KQ}( 
M_{\omega_1}, 
{\displaystyle \bigoplus_{\omega_1} KQ} )
\neq 0$.
In particular, 
$M_{\omega_1}$ is not a projective $KQ$-module.
\end{claim}

\begin{proof}
$F_{\omega_1}$ denotes 
the $KQ$-module
\[
\left\langle
\set{e^\gamma_{n_\gamma} :
\gamma\in \omega_1\setminus\Lim}
\cup
\set{
e^{\alpha}_{n}  :
\alpha\in \omega_1\cap\Lim,
n\in\omega}
\right\rangle_{KQ}
.
\]
Applying ${\rm Hom}_{KQ}( - , 
{\displaystyle \bigoplus_{\omega_1} KQ} )$
to the exact sequence
\[
\xymatrix@C=10pt{\displaystyle
0 \ar[r] &
I_{\omega_1} \ar[rr]^-{{\rm id}_{I_{\omega_1}}}
&& F_{\omega_1} \ar[r]
& M_{\omega_1} \ar[r]
& 0
},
\]
we obtain the exact sequence 
\begin{multline*}
\xymatrix@C=10pt{
0 \ar[r] &
{\rm Hom}_{KQ}(M_{\omega_1},
{\displaystyle \bigoplus_{\omega_1} KQ} ) \ar[r]
& \ }
\\
\xymatrix@C=10pt{{\rm Hom}_{KQ}(F_{\omega_1},
{\displaystyle \bigoplus_{\omega_1} KQ} ) 
\ar[rrrrrr]^-{ {\rm Hom}_{KQ}({\rm id}_{I_{\omega_1}},
\underset{\omega_1}{\bigoplus}KQ)}
&&&&&& {\rm Hom}_{KQ}(I_{\omega_1},
{\displaystyle \bigoplus_{\omega_1} KQ}
)
}.
\end{multline*}
Then
\[
{\rm Ext}^1_{KQ}(M_{\omega_1},
{\displaystyle \bigoplus_{\omega_1} KQ} )
=
\left. {\rm Hom}_{KQ}(I_{\omega_1},
{\displaystyle \bigoplus_{\omega_1} KQ} ) \right/
{\rm Im}( {\rm Hom}_{KQ}({\rm id}_{I_{\omega_1}},
{\displaystyle \bigoplus_{\omega_1} KQ} ) )
.
\]
By {\itsc Remark} {\rmsc \ref{inf ctbl I basis}},
we can find
a $KQ$-homomorphism $\varphi$ 
in ${\rm Hom}_{KQ}( I_{\omega_1} ,
{\displaystyle \bigoplus_{\omega_1} KQ} )$
such that
for each $\alpha\in\omega_1\cap\Lim$ and $n\in\omega$,
\[
\varphi(
e^{\zeta^\alpha_n}_{n}
-
e^{\alpha}_{n}
+
e^{\alpha}_{n+1}
a_{n}) :=
e^\alpha_{n}
.
\]
We show that
$\varphi$ does not belong to 
${\rm Im}({\rm Hom}_{KQ}({\rm id}_{I_{\omega_1}},
{\displaystyle \bigoplus_{\omega_1} KQ} ) )$.

Assume that $\varphi\in {\rm Im}({\rm Hom}_{KQ}({\rm id}_{I_{\omega_1}},
{\displaystyle \bigoplus_{\omega_1} KQ} ) )$, %.
%Then there exists
%a $\psi$ in the set ${\rm Hom}_{KQ}(M_{\omega_1}, 
%{\displaystyle \bigoplus_{\omega_1} KQ} )$
and 
let 
$\psi$ be a homomorphism in 
${\rm Hom}_{KQ}( F_{\omega_1}, 
{\displaystyle \bigoplus_{\omega_1} KQ} )$
such that
\[
{\rm Hom}_{KQ}({\rm id}_{I_{\omega_1}},
{\displaystyle \bigoplus_{\omega_1} KQ} ) 
(\psi)=\psi\circ {\rm id}_{I_{\omega_1}}
= \psi\restriction I_{\omega_1}=\varphi
.
\]
We note that
for each $\gamma\in\omega_1$ and $n\in\omega$,
$\supp(\psi(e^{\gamma}_{n}))$
is a finite subset of $\omega_1$.
So
we can take an $\alpha\in\omega_1\cap \Lim$
such that, 
for every $\gamma\in\alpha$ and $n\in\omega$,
$\supp(\psi(e^{\gamma}_{n}))$
is a finite subset of $\alpha$\footnote{
This can be done by e.g. \cite[Exercise III.6.20]{Kunen:new}}.
For each $n\in\omega$,
\[
\psi(e^{\zeta^\alpha_n}_{n})
-
\psi(e^{\alpha}_{n})
+
\psi(e^{\alpha}_{n+1})
a_{n}
=
\psi(
e^{\zeta^\alpha_n}_{n}
-
e^{\alpha}_{n}
+
e^{\alpha}_{n+1}
a_{n})
= e^\alpha_n
.
\]
Therefore, by induction on $n\in\omega$,
\[
\begin{array}{r@{\ }c@{\ }l}
\psi(e^\alpha_0)
& = & 
\psi(e^{\zeta^\alpha_0}_{0})
-
e^{\alpha}_{0}
+
\psi(e^{\alpha}_{1})
a_{0}
\\[1em]
\rule{25pt}{0pt}
& = &
\psi(e^{\zeta^\alpha_0}_{0})
-
e^{\alpha}_{0}
+
\left(
\psi(e^{\zeta^\alpha_1}_1) - e^\alpha_1 + \psi(e^\alpha_2) a_1 
\right)
a_{0}
\\[1em]
\rule{25pt}{0pt}
& = &
\psi(e^{\zeta^\alpha_0}_{0})
-
e^{\alpha}_{0}
+
\psi(e^{\zeta^\alpha_1}_1) a_{0} 
- e^\alpha_1 a_{0} 
+ \psi(e^\alpha_2) a_1 a_{0}
\\[1em]
\rule{25pt}{0pt}
& = & \cdots
\\
\rule{25pt}{0pt}
& = & \displaystyle
\psi(e^{\zeta^\alpha_0}_0) 
- e^\alpha_0
+ \sum_{i=1}^{n} \psi(e^{\zeta^\alpha_i}_i)
a_{i-1}\cdots a_0
-
\sum_{i=1}^{n}e^\alpha_i
a_{i-1}\cdots a_0
\\
\rule{25pt}{0pt}
&  & 
+ \psi(e^{\alpha}_{n+1})
a_{n} \cdots a_0.
\end{array}
\]
Hence, 
for {\em every} $n\in\omega$,
since each 
$\supp(\psi(e^{\zeta^\alpha_i}_i))$ does not contain $\alpha$
as a member,
\[
\psi(e^\alpha_0 )(\alpha)
\not\in
K Q_{\leq n}
,
\]
where $KQ_{\leq n}$ is 
the $K$-subspace of $KQ$
generated by all paths of length $\leq n$.
This is a contradiction. 
%because 
%$\psi(e^\alpha_0 )(\alpha)$ 
%have to belong to $KQ$.
\end{proof}

The following is similar to 
\cite[Theorem XII 2.2,  Proposition XIII 0.2]{EM}.

\begin{thm}
\label{inf simple thm}
Suppose that
$K$ is a countable field.
Then
{\sf UP}
implies that
${\rm Ext}^1_{KQ}(M_{\omega_1},KQ)=0$.
In particular, 
${\sf P}_{KQ}({\rm Mod}KQ)$ fails.
\end{thm}

\begin{proof}
Applying ${\rm Hom}_{KQ}( - , KQ)$
to the exact sequence 
\[
\xymatrix@C=10pt{\displaystyle
0 \ar[r] &
I_{\omega_1} \ar[rr]^-{{\rm id}_{I_{\omega_1}}}
&& F_{\omega_1} \ar[r]
& M_{\omega_1} \ar[r]
& 0
},
\]
we obtain the exact sequence 
\begin{multline*}
\xymatrix@C=10pt{
0 \ar[r] &
{\rm Hom}_{KQ}(M_{\omega_1},KQ) \ar[r]
& \ }
\\
\xymatrix@C=10pt{
&
{\rm Hom}_{KQ}(F_{\omega_1}, KQ ) 
\ar[rrrrr]^-{ {\rm Hom}_{KQ}({\rm id}_{I_{\omega_1}}, KQ)}
&&&&& {\rm Hom}_{KQ}(I_{\omega_1}, KQ)
}.
\end{multline*}
Then
\[
{\rm Ext}^1_{KQ}(M_{\omega_1}, KQ )
=
\left. {\rm Hom}_{KQ}(I_{\omega_1}, KQ ) \right/
{\rm Im}( {\rm Hom}_{KQ}({\rm id}_{I_{\omega_1}}, KQ ) )
.
\]

Let $\varphi\in {\rm Hom}_{KQ}(I_{\omega_1},KQ) $.
We show that
$\varphi$ belongs to 
${\rm Im}( {\rm Hom}_{KQ}({\rm id}_{I_{\omega_1}}, KQ ) )$.
%We notice that,
%for  $\alpha\in\omega_1\cap\Lim$ and $n\in\omega$,
%\[
%\varphi(
%e^{\zeta^\alpha_n}_{n}
%-
%e^{\alpha}_{n}
%+
%e^{\alpha}_{n+1}
%a_{n}
%)
%\, 
%e_{n}
%=
%\varphi(
%e^{\zeta^\alpha_n}_{n}
%-
%e^{\alpha}_{n}
%+
%e^{\alpha}_{n+1}
%a_{n}
%)
%,
%\]
%and, 
%for any $m\in Q_0\setminus \{n\}$,
%\[
%\varphi(
%e^{\zeta^\alpha_n}_{n}
%-
%e^{\alpha}_{n}
%+
%e^{\alpha}_{n+1}
%a_{n}
%)
%\, 
%e_{m}
%=
%\varphi(0_{\underset{\xi\in\omega_1}{\bigoplus} F^\xi})
%=
%0_{KQ}
%.
%\]
For each $\alpha\in\omega_1\cap\Lim$
and 
$n\in\omega$,
define
\[
d_\alpha(\zeta^\alpha_n)
:=
\varphi(
e^{\zeta^\alpha_n}_{n}
-
e^{\alpha}_{n}
+
e^{\alpha}_{n+1}
a_{n}
)
.
\]
We notice that,
for each $n\in\omega$,
\[
\varphi(
e^{\zeta^\alpha_n}_{n}
-
e^{\alpha}_{n}
+
e^{\alpha}_{n+1}
a_{n}
)
\, 
e_{n}
=
\varphi(
e^{\zeta^\alpha_n}_{n}
-
e^{\alpha}_{n}
+
e^{\alpha}_{n+1}
a_{n}
)
,
\]
and, 
for any $m\in Q_0\setminus \{n\}$,
\[
\varphi(
e^{\zeta^\alpha_n}_{n}
-
e^{\alpha}_{n}
+
e^{\alpha}_{n+1}
a_{n}
)
\, 
e_{m}
=
\varphi(0_{\underset{\xi\in\omega_1}{\bigoplus} F^\xi})
=
%0_{\underset{\omega_1}{\bigoplus} KQ}
0_{KQ}.
\]
Thus,
for each $n\in\omega$,
$d_\alpha(\zeta^\alpha_n)$ belongs to the countable set
\[
\sum_{p \text{ : path in $Q$ ending in
$n$}}
K p
.
\]
Therefore,
by {\sf UP},
we can find a uniformization 
$f$ 
of the ladder system coloring
$\seq{d_\alpha:\alpha\in\omega_1\cap \Lim}$,
that is,
for each $\alpha\in\omega_1\cap \Lim$,
there is an $N_\alpha\in\omega$
such that,
for every $n\geq N_\alpha$,
$f(\zeta^\alpha_n)=d_\alpha(\zeta^\alpha_n)$.

For each $\alpha\in\omega_1\cap\Lim$
and $n\in\omega$,
define
\begin{itemize}
\item
$\psi(e^{\zeta^\alpha_n}_{n})
:= f(\zeta^\alpha_n)$,

\item
$\psi(e^{\alpha}_{n})
:= 0_{KQ}$
when $n\geq N_\alpha$,
and

\item
by downward induction on $n<N_\alpha$,
define
\[
\psi(e^{\alpha}_{n})
:=
\psi(e^{\zeta^\alpha_n}_{n})
+
\psi(e^{\alpha}_{n+1})
a_{n}
\\
-
\varphi(
e^{\zeta^\alpha_n}_{n}
-
e^{\alpha}_{n}
+
e^{\alpha}_{n+1}
a_{n}
)
.
\]
\end{itemize}
By {\em Remark} \ref{direct sum F xi},
$\psi$ can be extended to a $KQ$-homomorphism
from 
$F_{\omega_1}$
into $KQ$.
Therefore
\[
\psi\restriction I_{\omega_1}
=
\psi\circ {\rm id}_{I_{\omega_1}}
=
{\rm Hom}_{KQ}({\rm id}_{I_{\omega_1}}, KQ )(\psi)
=
\varphi
,
\]
which finishes the proof.
\end{proof}

\begin{remark}
By a similar argument to the one in the previous theorem,
it can be proved that
{\em 
if $K$ is a countable field
and
{\sf UP} holds,
then
$
{\rm Ext}^1_{KQ}(M_{\omega_1},
\displaystyle \bigoplus_{\omega} KQ)$ $=0$.
}
\end{remark}

\begin{remark}
\label{inf simple non v ext}
By a similar argument as in \cite[Lemma 4.3]{E Shelah},
we can show that 
{\em if $K$ is a countable field,
then
$\diamondsuit$\footnote{$\diamondsuit$ 
is one set theoretic axiom
consistent with $\ZFC$,
see e.g. \cite{Kunen:new}.} 
implies 
${\rm Ext}^1_{KQ}(M_{\omega_1},KQ)\neq0$.}
The main ingredient to prove this is the following fact.
\begin{claim}
Suppose that
${\rm Ext}_{KQ}(M_{\alpha+1}/M_\alpha,KQ)\neq 0$,
and let
\[
\xymatrix@C=15pt{\displaystyle
0 \ar[r] &
KQ \ar[r]
& C_\alpha \ar[r]^-{\pi}
& M_{\alpha} \ar[r]
& 0
}
\]
be a short exact sequence that
splits,
that is,
there exists
a homomorphism $\rho$ from $M_\alpha$ into $C_\alpha$
such that
$\pi\circ \rho= {\rm id}_{M_\alpha}$.
Then there exists a short exact sequence 
\[
\xymatrix@C=15pt{\displaystyle
0 \ar[r] &
KQ \ar[r]
& C_{\alpha+1} \ar[r]^-{\pi'}
& M_{\alpha+1} \ar[r]
& 0
}
\]
such that
\[
\pi'\restriction C_\alpha=\pi
\]
and
there is no homomorphism $\rho'$
from $M_{\alpha+1}$ into $C_{\alpha+1}$
such that
\[
\pi'\circ\rho'={\rm id}_{M_\alpha+1}
\ \ \ \text{ and } \ \ \ 
\rho'\restriction M_\alpha=\rho.
\]
\end{claim}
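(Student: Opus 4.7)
The plan is to construct $C_{\alpha+1}$ as an amalgamation of $C_\alpha$ with a pullback coming from a non-split extension of $M_{\alpha+1}/M_\alpha$ by $KQ$, and then to show that any homomorphism $\rho'$ extending $\rho$ and splitting $\pi'$ would descend to a splitting of that non-split extension, yielding a contradiction.

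First, using the hypothesis ${\rm Ext}^1_{KQ}(M_{\alpha+1}/M_\alpha,KQ)\neq 0$, I would fix a non-split short exact sequence $0 \to KQ \xrightarrow{j} E \xrightarrow{q} M_{\alpha+1}/M_\alpha \to 0$, and form the pullback $P := M_{\alpha+1} \times_{M_{\alpha+1}/M_\alpha} E$ with projections $p_1: P \to M_{\alpha+1}$ and $p_2: P \to E$. This yields a short exact sequence $0 \to KQ \to P \to M_{\alpha+1} \to 0$, whose restriction to $M_\alpha$ admits the canonical splitting $\sigma(m):=(m,0)$.

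Next, I would use the two splittings $\rho$ and $\sigma$ to identify the extensions above $M_\alpha$: define the isomorphism $\phi: p_1^{-1}(M_\alpha) \to C_\alpha$ by $\phi(k + \sigma(m)) := k + \rho(m)$ for $k \in KQ$ and $m \in M_\alpha$. Set
\[
C_{\alpha+1} := (C_\alpha \oplus P)\big/\left\langle (\phi(x),-x) : x \in p_1^{-1}(M_\alpha) \right\rangle_{KQ},
\]
and $\pi'([(c,p)]) := \pi(c) + p_1(p)$. A direct verification shows that $\pi'$ is well-defined, restricts to $\pi$ along the natural inclusion $C_\alpha \hookrightarrow C_{\alpha+1}$, and has kernel exactly the canonical copy of $KQ$ inside $C_\alpha$, giving the desired short exact sequence.

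Finally, suppose for contradiction that such a $\rho'$ exists. Define $\mu: C_\alpha \oplus P \to E$ by $\mu(c,p) := j(c - \rho(\pi(c))) + p_2(p)$, noting that $c - \rho(\pi(c)) \in \ker \pi = KQ$, so $j$ applies. A short computation shows $\mu$ annihilates the defining relations of $C_{\alpha+1}$, hence descends to $\bar\mu: C_{\alpha+1} \to E$, and $\bar\mu(\rho(a)) = j(\rho(a)-\rho(a)) = 0$ for all $a \in M_\alpha$. Therefore $\bar\mu \circ \rho'$ vanishes on $M_\alpha$ and factors through a homomorphism $\bar\rho: M_{\alpha+1}/M_\alpha \to E$, which a diagram chase identifies as a splitting of $q$, contradicting the choice of $E$. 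The hard part will be arranging the pushout so that $\ker \pi'$ is a single copy of $KQ$ rather than $KQ \oplus KQ$; the choice of $\phi$ as the identity on the $KQ$-summands while transferring $\sigma$ to $\rho$ on the $M_\alpha$-summands is exactly what makes this work, after which the remaining verifications reduce to routine diagram chasing, provided one keeps careful track of the various copies of $KQ$ involved.
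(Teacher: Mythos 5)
Your proof is correct. Note that the paper itself gives no proof of this Claim: it is stated inside \emph{Remark} \ref{inf simple non v ext} as the ``main ingredient,'' with the reader referred to Eklof's \cite[4.3 Lemma]{E Shelah}. Your argument --- pull back a non-split extension of $M_{\alpha+1}/M_\alpha$ by $KQ$ along $M_{\alpha+1}\to M_{\alpha+1}/M_\alpha$, identify the (split) restriction over $M_\alpha$ with $C_\alpha$ via the two splittings $\sigma$ and $\rho$, amalgamate, and then show that any $\rho'$ extending $\rho$ descends to a splitting of the chosen non-split extension --- is exactly the standard argument from that reference, and the details you flag (that $\phi$ merges the two copies of $KQ$ so that $\ker\pi'$ is a single copy, and that $\bar\mu\circ\rho'$ kills $M_\alpha$ and covers the identity on $M_{\alpha+1}/M_\alpha$ via the pullback relation $q\circ p_2 = \mathrm{proj}\circ p_1$) all check out.
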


\noindent
Since $KQ$ is countable and {\bfsc Claim \ref{inf ctbl M alpha 1}} holds,
a similar argument as in \cite[Theorem 6.3]{E Shelah}
works well to show that
${\rm Ext}_{KQ}(M_{\omega_1},KQ)\neq 0$.
Moreover, 
by a similar argument as in \cite{HS},
we can show that,
{\em if $K$ is a countable field
and
there is 
a set $\set{S_\alpha:\alpha\in\omega_1}$ of 
pairwise disjoint stationary subsets of $\omega_1$
such that
$\diamondsuit_{S_\alpha}$ holds for each $\alpha\in\omega_1$,
then 
the cardinality of ${\rm Ext}_{KQ}(M_{\omega_1},KQ)$
is greater than $\aleph_1$.}
%\hfill
%$\dashv_{\underline{\text{{\itsc Remark} {\rmsc \ref{inf simple non v ext}}}}}$
\end{remark}

%%%%%%%%%%%%%%%%%%%%%%%%%%
\subsection{On a circular quiver}

In this subsection,
let 
$K$ be a field
and 
$Q$ the following quiver.
\[
%%%\text{\input{circle(arrow).tex}}
%%%\ \ \ \ \ \ \ \ 
\scalebox{0.75}{
\begin{xy}
(20,40) *++={0} ="A", 
(34,34) *++={1}="B",
(40,20) *++={2}="C",
(34,6) *++={3}="D",
(6,6) *+-={k-2}="E",
(0,20) *+-={k-1}="F",
(6,34) *++={k}="G",
(20,0) ="H",
\ar^{\text{\normalsize $a_0$}} @/^4pt/ "A";"B" ,
\ar^{\text{\normalsize $a_1$}} @/^4pt/ "B";"C" ,
\ar^{\text{\normalsize $a_2$}} @/^4pt/ "C";"D" ,
\ar @/^5pt/ @{.} "D";"H" ,
\ar @/^5pt/ @{.} "H";"E" ,
\ar^{\text{\normalsize $a_{k-2}$}} @/^4pt/  "E";"F" ,
\ar^{\text{\normalsize $a_{k-1}$}} @/^4pt/ "F";"G" 
\ar^{\text{\normalsize $a_k$}} @/^4pt/ "G";"A" 
\end{xy}
}
\]
Then the path $a_0 a_1 \cdots a_k$
is a path in $Q$ whose source and target are both
the vertex $0$.
We denote the path
\[
\left(a_0 a_1 \cdots a_k\right)^0
= e_0
,
\]
and, 
for each $n\in\omega$,
define the path
\[
\left(a_0 a_1 \cdots a_k\right)^{n+1}
= 
\left(a_0 a_1 \cdots a_k\right)^n
a_0 a_1 \cdots a_k
.
\]
Recall that 
$\displaystyle
\sum_{v\in Q_0}e_v$
is the identity of $KQ$.
%As in the previous subsection,
%for each 
%%$\gamma\in\omega_1\setminus\Lim$, 
%$\alpha\in\omega_1\cap\Lim$ and $n\in\omega_1$,
%we denote
%\[
%%e^{\gamma}_0
%%:= e^{\gamma,0}_0,
%%\ \ \ 
%e^{\alpha,n}_0 := e^{\alpha,n,0}_0.
%\]
For each $\alpha\in\omega_1\cap\Lim$, 
define
\[
G_\alpha  := 
\left\langle
\left\{
e^{\zeta^\alpha_n}_{0}
-
e^{\alpha,n}_{0}
+
e^{\alpha,n+1}_{0}
a_0 a_1 \cdots a_k :
n\in\omega
\right\}\right\rangle_{KQ}
,
\]
\[
I_{\omega_1}:=\sum_{\xi\in\omega_1\cap\Lim}G_\xi
,
\]
and,
for each $\xi\in\omega_1+1$,
define
the $KQ$-module
$M_\xi$ 
by
\[
\left\langle
\set{e^{\gamma}_{0} + I_{\omega_1} :
\gamma\in \xi\setminus\Lim}
\cup
\set{
e^{\alpha,n}_{0} + I_{\omega_1} :
\alpha\in \xi\cap\Lim,
n\in\omega}
\right\rangle_{KQ}
,
\]
which is considered as a $KQ$-submodule of 
the quotient module
$\displaystyle
\left.
\displaystyle \bigoplus_{\xi\in\omega_1} F^\xi
\right/I_{\omega_1}.
$

\begin{claim}
\label{2 M alpha 1}
${\rm Ext}^1_{KQ}( 
M_{\omega_1},\displaystyle \bigoplus_{\omega_1} KQ)
\neq 0$.
Therefore, $M_{\omega_1}$ is not projective.
\end{claim}

%%Though $KQ$ of \S\ref{inf simple} does not have an identity,
%%$KQ$ of this subsection has an identity,
%%hence
%It follows from this claim that
%$M_{\omega_1}$ is not projective.

\begin{proof}
This can be proved in a similar way as in
{\bfsc Claim \ref{inf ctbl M alpha 1}}.
To see this,
it suffices to replace 
the formula
\[
\varphi(
e^{\zeta^\alpha_n}_{n}
-
e^{\alpha}_{n}
+
e^{\alpha}_{n+1}
a_{n}) := e^\alpha_n
\]
by the formula
\[
\varphi(
e^{\zeta^\alpha_n}_{0}
-
e^{\alpha,n}_{0}
+
e^{\alpha,n+1}_{0}
a_0 a_1 \cdots a_k)
:= e^{\alpha,n}_{0}
\]
in the proof of {\bfsc Claim \ref{inf ctbl M alpha 1}}.
\end{proof}

Moreover, by a similar proof as 
{\bfsc Theorem \ref{inf simple thm}},
the following theorem can be proved.

\begin{thm}
\label{2 thm}
Suppose that
$K$ is a countable field.
Then
{\sf UP}
implies that
${\rm Ext}^1_{KQ}(M_{\omega_1},KQ)=0$.
In particular,
${\sf P}_{KQ}({\rm Mod}KQ)$ fails.
\end{thm}

\begin{remark}
\label{2 non v ext}
As in {\itsc Remark} {\rmsc \ref{inf simple non v ext}},
%if $K$ is countable, 
%then $\diamondsuit$ implies
%${\rm Ext}^1_{KQ}(M_{\omega_1},KQ)$
%$\neq 0$.
if $K$ is a countable field and
$\diamondsuit$ holds, 
then 
${\rm Ext}^1_{KQ}(M_{\omega_1},KQ)\neq 0$.
\end{remark}

%%%%%%%%%%%%%%%%%%
\subsection{Generalizations}

\begin{thm}
\label{cor final}
Suppose that
$K$ is a countable field
and 
$Q'$ is a quiver
that contains 
a subquiver $Q$ of one of the following types
\[
\begin{xy}
(10,20) *++={v} ="A", 
(17,17) *++={\circ}="B",
(20,10) *++={\circ}="C",
(17,3) *++={\circ}="D",
(3,3) *++={\circ}="E",
(0,10) *++={\circ}="F",
(3,17) *++={\circ}="G",
(10,0) ="H",
\ar @/^2pt/ "A";"B" ,
\ar @/^2pt/ "B";"C" ,
\ar @/^2pt/ "C";"D" ,
\ar @/^2pt/ @{.} "D";"H" ,
\ar @/^2pt/ @{.} "H";"E" ,
\ar @/^2pt/  "E";"F" ,
\ar @/^2pt/ "F";"G" 
\ar @/^2pt/ "G";"A" 
\end{xy}
\text{\raisebox{3em}{ \ , \ \ \ $\xymatrix@C=15pt{
v
& \circ \ar[l]
& \circ \ar[l]
& \cdots \ar[l]
& \circ \ar[l]
& \circ \ar[l]
& \cdots \ar[l]
}$}}
\ \ \ 
\]
\noindent
in such a way that
the set of all paths in $Q'$ ending in $v$ is countable.
Then
{\sf UP}
implies the failure of
${\sf P}_{KQ'}({\rm Mod}KQ')$.
\end{thm}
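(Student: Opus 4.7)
The plan is to repeat the construction of \S\ref{inf simple} (when $Q$ is of $A_\infty$ type) or of the preceding subsection (when $Q$ is the cyclic subquiver) in the larger ring $KQ'$ instead of $KQ$. Concretely, I would set $F^\gamma=F^{\alpha,n}:=KQ'$ and $F^\alpha:=\bigoplus_{n\in\omega}F^{\alpha,n}$, form the elements $e^\gamma_v$ and $e^{\alpha,n}_v$ in $\bigoplus_{\xi\in\omega_1}F^\xi$ exactly as at the start of \S\ref{inf gen pa}, define each $G_\alpha$ by the same relation generators $x^\alpha_n$ as in the corresponding subsection, put $I_{\omega_1}=\sum_{\alpha\in\omega_1\cap\Lim}G_\alpha$, and let $M_{\omega_1}$ be the resulting quotient $KQ'$-module. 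It then suffices to check (a) that $M_{\omega_1}$ is not projective and (b) that {\sf UP} implies ${\rm Ext}^1_{KQ'}(M_{\omega_1},KQ')=0$.

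For (a) I would adapt {\bfsc Claim \ref{inf ctbl M alpha 1}} (respectively {\bfsc Claim \ref{2 M alpha 1}}) essentially verbatim: the homomorphism $\varphi:I_{\omega_1}\to\bigoplus_{\omega_1}KQ'$ that sends each relation generator $x^\alpha_n$ to $e^\alpha_n$ (resp.\ $e^{\alpha,n}_0$) does not lift to $F_{\omega_1}$, because the same telescoping computation as in the proof of {\bfsc Claim \ref{inf ctbl M alpha 1}} forces $\psi(e^\alpha_0)(\alpha)\in KQ'$ to involve distinct paths of $Q\subseteq Q'$ of unbounded length (namely the $a_n\cdots a_0$ in the $A_\infty$ case, or the powers $(a_0\cdots a_k)^n$ in the cyclic case), which is impossible for a finite element of $KQ'$ since distinct paths of $Q'$ are linearly independent in $KQ'$.

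For (b) I would mimic the proof of {\bfsc Theorem \ref{inf simple thm}} (respectively {\bfsc Theorem \ref{2 thm}}): given $\varphi\in{\rm Hom}_{KQ'}(I_{\omega_1},KQ')$, define the ladder coloring $d_\alpha(\zeta^\alpha_n):=\varphi(x^\alpha_n)$. A direct computation shows $x^\alpha_n e_w=x^\alpha_n$ for a specific vertex $w$ (namely $w=v$ in the cyclic case and $w=v_n$ in the $A_\infty$ case), so $d_\alpha(\zeta^\alpha_n)$ lies in the $K$-vector space $KQ'\cdot e_w$ of paths of $Q'$ ending at $w$. The essential new point of the proof is to verify that $KQ'e_w$ is countable in each case: for the cyclic subquiver this is immediate from the hypothesis on $v$, while for the $A_\infty$ subquiver, right-multiplication by $p_n:=a_{n-1}\cdots a_0$ (the unique path in $Q$ from $v_n$ to $v$) defines an injective $K$-linear map $KQ'e_{v_n}\hookrightarrow KQ'e_v$, because distinct paths of $Q'$ remain distinct when post-composed with the fixed path $p_n$; hence the countability of $KQ'e_v$ transfers to each $KQ'e_{v_n}$. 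Once $d_\alpha(\zeta^\alpha_n)$ is known to range over a countable set, {\sf UP} produces a uniformization $f$ of the coloring, and the same downward induction as in {\bfsc Theorem \ref{inf simple thm}}/{\bfsc Theorem \ref{2 thm}} builds a $\psi\in{\rm Hom}_{KQ'}(F_{\omega_1},KQ')$ with $\psi\restriction I_{\omega_1}=\varphi$. The main (indeed, only) substantive new ingredient above the previous subsections is this right-multiplication embedding $KQ'e_{v_n}\hookrightarrow KQ'e_v$, which is what allows the single countability hypothesis on paths ending at $v$ to suffice in the $A_\infty$ case.
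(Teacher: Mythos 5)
Your proposal is correct and follows essentially the same route as the paper, whose proof of this theorem is only a two-sentence sketch (take one of the modules $M_{\omega_1}$ constructed before, regard it over $KQ'$, and ``argue as before''). You in fact supply the one genuinely new detail that the sketch leaves implicit: that the countability hypothesis on the paths of $Q'$ ending in $v$ transfers to $KQ'e_{v_n}$ for every $n$ via right multiplication by the path $a_{n-1}\cdots a_0$, which is exactly what is needed to keep the ladder-system coloring countable-valued and invoke \textsf{UP}.
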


%As seen in {\itsc Remark} {\rmsc \ref{ex A inf}},
%every countably infinite locally finite quiver
%that satisfies the assumption of
%{\bfsc Theorem \ref{fqa cor}}
%also satisfies the assumption of this theorem.

\begin{proof}
Let $M_{\omega_1}$ be one of the $KQ$-modules constructed before.
Then, $M_{\omega_1}$ 
can be considered 
as a $KQ'$-module
and, by a similar argument as before,
it can be proved that
${\rm Ext}^1_{KQ'}(M_{\omega_1},
\displaystyle \bigoplus_{\omega_1}KQ')\neq 0$,
and 
that
{\sf UP}
implies 
${\rm Ext}^1_{KQ'}(M_{\omega_1},KQ') =0$.
\end{proof}

\noindent
{\bfsc Acknowledgement}.
\
The authors thank Hiroyuki Minamoto, Izuru Mori 
and Kenta Ueyama
for useful comments about this research.
Especially, they provided us advice and information about 
{\bfsc Theorem \ref{fdA}},
{\itsc Remark} {\rmsc \ref{Noether ring}}
and
{\bfsc Proposition \ref{closed quiver}}.

The authors also appreciate the useful comments from the referee.

%%%%%%%%%%%%%%%%%%%%%%%%%%
%%%%%%%%%%%%%%%%%%%%%%%%%%


\begin{thebibliography}{00}

%\markboth{BIBLIOGRAPHY}{BIBLIOGRAPHY}

%\bibitem{AbrahamShelah: MA not BA}
%U. Avraham and S. Shelah.
%{\em Martin's axiom does not imply that 
%every two $\aleph_1$-dense sets of reals are isomorphic}. 
%Israel J. Math. 38 (1981), no. 1-2, 161--176

%\bibitem{AbrahamRubinShelah}
%U. Abraham, M. Rubin and S. Shelah.
%{\em On the consistency of some partition theorems 
%for continuous colorings, and the structure of 
%$\aleph_1$-dense real order types}.
%Ann. Pure Appl. Logic 29 (1985), no. 2, 123--206. 

%\bibitem{AT: partition properties}
%U. Abraham and S. Todor\v cevi\'c.
%{\em Partition properties of $\omega_1$ 
%compatible with CH}.
%Fund. Math. 152 (1997), 165--180.

\bibitem{AF}
F. W. Anderson and K.R. Fuller.
Rings and categories of modules. Second edition. 
Graduate Texts in Mathematics, 13. {\it Springer-Verlag, New York}, 1992.

%\bibitem{AsperoMota: c large}
%David Asper\'o and Miguel Angel Mota, 
%{\em Forcing consequences of PFA together with the continuum large}. 
%Trans. Amer. Math. Soc.
%367 (2015), no. 9, 6103--6129.

%\bibitem{AsperoMota: measuring c large}
%David Asper\'o and Miguel Angel Mota, 
%{\em Measuring club-sequences with a large continuum}.
%Preprint, 
%arXiv:1203.1238.

%\bibitem{AsperoMota: genMA}
%David Asper\'o and Miguel Angel Mota, 
%{\em A Generalization of Martin's Axiom}.
%Israel J. Math. 210 (2015), no. 1, 193--231. 
%
%\bibitem{AsperoMota: AM clubguess}
%David Asper\'o and Miguel Angel Mota, 
%{\em Separating club-guessing principles 
%in the presence of fat forcing axioms}. 
%Ann. Pure Appl. Logic 167 (2016), no. 3, 284--308

\bibitem{ARS}
M. Auslander, I Reiten and S. Smal\o.
Representation theory of Artin algebras.Cambridge Studies in Advanced Mathematics, 36.
{\it Cambridge University Press, Cambridge}, 
1995. 

\bibitem{ASS}
I. Assem, D. Simson and A. Skowro\' nski.
Elements of the representation theory of associative algebras. Vol. 1. 
Techniques of representation theory. 
London Mathematical Society Student Texts, 65. 
{\it Cambridge University Press, Cambridge}, 2006. 


%\bibitem{AR}
%M. Auslander and I. Reiten.
%On a generalized version of the Nakayama conjecture. 
%{\em Proc. Amer. Math. Soc}. 52 (1975), 69--74. 

%\bibitem{BahlekehFallah} 
%A. Bahlekeh and A. M. Fallah.
%Progress on the Auslander-Reiten conjecture. 
%{\em Bull. Aust. Math. Soc}. 93 (2016), no. 3, 433--440. 

%\bibitem{BFS}
%A. Bahlekeh, A. M. Fallah and S. Salarian. 
%On the Auslander-Reiten conjecture for algebras. 
%{\em J. Algebra} 427 (2015), 252--263.

%\bibitem{BKS}
%A. Bahlekeh, T. Kakaei and S. Salarian.
%On the Auslander-Reiten conjecture for Cohen-Macaulay rings 
%and path algebras. 
%{\it Comm. Algebra} 45 (2017), no. 1, 121--129.

%\bibitem{Bartoszynski: addN implies addM}
%T. Bartoszy\'nski. 
%{\em Additivity of measure implies 
%additivity of category}. 
%Trans. Amer. Math. Soc. 281 (1984), no. 1, 209--213.

%\bibitem{Bartoszynski: handbook}
%T. Bartoszy\'nski. 
%{\em Invariants of Measure and Category}. 
%Handbook of set theory. Vols. 1, 2, 3, 491--555, Springer, Dordrecht, 2010.

%\bibitem{BartoszynskiJudah: Book}
%T. Bartoszy\'nski and H. Judah. 
%{\em Set theory. 
%On the structure of the real line}. 
%A K Peters, Ltd., Wellesley, MA, 1995.

%\bibitem{Baumgartner: iterated forcing}
%J. Baumgartner.
%{\em Iterated forcing}. 
%Surveys in set theory, 1--59, London Math. Soc. Lecture Note Ser., 
%87, Cambridge Univ. Press, Cambridge, 1983.


%\bibitem{Baumgartner: PFA}
%J. Baumgartner.
%{\em Applications of the Proper Forcing Axiom,}
%Handbook of Set-Theoretic Topology, chapter 21, pp. 913-959.
%
%\bibitem{Bell: p}
%M. Bell.
%{\em On the combinatorial principle $P(\fc)$}. 
%Fund. Math. 114 (1981), no. 2, 149--157.
%
%\bibitem{Blass: handbook}
%A. Blass.
%{\em Combinatorial cardinal characteristics of the continuum}. 
%Handbook of set theory. Vols. 1, 2, 3, 395--489, Springer, Dordrecht, 2010.

%\bibitem{Bekkali note}
%M. Bekkali.
%{\em Topics in set theory. Lebesgue measurability, 
%large cardinals, forcing axioms, rho-functions. 
%Notes on lectures by Stevo Todorcevic}. 
%Lecture Notes in Mathematics, 1476. 
%Springer-Verlag, Berlin, 1991.


\bibitem{Benson:Book}
D. J. Benson.
Representations and cohomology. I. 
Basic representation theory of finite groups and associative algebras.
Cambridge Studies in Advanced Mathematics, 30. 
{\em Cambridge University Press, Cambridge}, 1991.


\bibitem{Brune:leftright}
H. Brune.
Some left pure semisimple ringoids which are not right pure semisimple. 
{\em Comm. Algebra} 7 (1979), no. 17, 1795--1803. 


%\bibitem{CelikbasTakahashi}
%O. Celikbas and R. Takahashi.
%Auslander-Reiten conjecture and Auslander-Reiten duality. 
%{\em J. Algebra} 382 (2013), 100--114. 

\bibitem{DevlinShelah: weak diamond}
K. Devlin and S. Shelah.
{\em A weak version of $\diamondsuit$ which follows from 
$2^{\aleph_0}<2^{\aleph_1}$}.
Israel J. Math. 29 (1978), no. 2-3, 239--247.

%\bibitem{DiPriscoTodorcevic: LRU}
%C. A. Di Prisco and S. Todor\v cevi\'c.
%{\em Perfect-set properties in $L(\mathbb R)[U]$}. 
%Adv. Math. 139 (1998), no. 2, 240--259. 

%\bibitem{EisworthMoore: iteration CH}
%T. Eisworth, J. T. Moore and D. Milovich
%{\em Iterated forcing and the Continuum Hypothesis}, 
%in Appalachian set theory 2006-2012, J. Cummings and E. Schimmerling, eds. 
%London Math Society Lecture Notes series, Cambridge University Press (2013). 


\bibitem{Enochsetc:projrep}
E. E. Enochs and S. Estrada.
Projective representations of quivers. (English summary) 
{\em Comm. Algebra} 33 (2005), no. 10,3467--3478. 

\bibitem{Enochsetc:flatcovers}
E. E. Enochs, S. Estrada, J. R. Garc\'ia Rozas and L. Oyonarte.
Flat covers of representations of the quiver $A_\infty$. 
{\em Int. J. Math. Math. Sci.} 2003, no. 70,4409--4419. 

%\bibitem{Enochsetc:hom1999}
%E. E. Enochs and I. Herzog.
%A homotopy of quiver morphisms with applications to representations. 
%{\em Canad. J. Math}. 51 (1999), no. 2, 294--308.

\bibitem{Enochsetc:flatflat}
E. E.  Enochs, L.  Oyonarte and  B. Torrecillas.
Flat covers and flat representations of quivers.
{\em Comm. Algebra} 32 (2004), no. 4,1319--1338. 


\bibitem{E Shelah}
P. C. Eklof.
Whitehead's problem is undecidable. 
{\it Amer. Math. Monthly} 83 (1976), no. 10, 775--788. 

\bibitem{EM}
P. C. Eklof and A. H. Mekler. 
Almost free modules. Set-theoretic methods. Revised edition. 
North-Holland Mathematical Library, 65. 
{\it North-Holland Publishing Co., Amsterdam}, 2002.


%\bibitem{Farah: embedding}
%I. Farah.
%{\em Embedding partially ordered sets into $\omega^\omega$}. 
%Fund. Math. 151 (1996), no. 1, 53--95.


%\bibitem{Farah: 1995 1}
%I. Farah.
%{\em Small forcing preserves Reflection}, 
%April 2,
%1995.


%\bibitem{Farah: 1995 2}
%I. Farah.
%{\em Preserving Reflection}, 
%April 18,
%1995.

%\bibitem{Farah: 1995}
%I. Farah.
%{\em ${\rm OCA}_{\aleph_1}+{\rm MA}_{\aleph_1}$ is consistent with 
%continuum being large}, 
%April 27,
%1995.


%\bibitem{Farah: 1995 3}
%I. Farah.
%{\em All gaps are Hausdorff}, 
%November 13,
%1995.



%\bibitem{Farah: OCA}
%I. Farah.
%{\em OCA and towers in $\mathcal P(N)/{\rm fin}$}. 
%Comment. Math. Univ. Carolin. 37 (1996), no. 4, 861--866. 

%\bibitem{Farah: quotients}
%I. Farah.
%{\em Analytic quotients: 
%theory of liftings for quotients over analytic ideals 
%on the integers}. 
%Mem. Amer. Math. Soc. 148 (2000), no. 702, xvi+177 pp. 

%\bibitem{Farah: inner}
%I. Farah.
%{\em All automorphisms of the Calkin algebra are inner}. 
%Ann. of Math. (2) 173 (2011), no. 2, 619--661.

%\bibitem{FischerTallTodorcevic}
%A. J. Fischer, F. D. Tall and S. Todor\v cevi\'c.
%{\em Forcing with a coherent Souslin tree and 
%locally countable subspaces of countably tight compact spaces},
%preprint.

\bibitem{Fremlin}
D. H. Fremlin.
Consequences of Martin's axiom. 
Cambridge Tracts in Mathematics, 84. 
{\em Cambridge University Press, Cambridge}, 1984. 

 \bibitem{G}
   P. Gabriel, 
   Auslander-Reiten sequences and representation-finite algebras, 
   {\it Representation theory, I 
   (Proc. Workshop, Carleton Univ., Ottawa, Ont., 1979)}, 
   1--71, Lecture Notes in Math., \textbf{831}, Springer, Berlin, 1980. 

\bibitem{GR}
P. Gabriel and A. V. Roiter. 
Representations of finite-dimensional algebras. 
With a chapter by B. Keller. Encyclopaedia Math. Sci., 73, 
{\it Algebra, VIII}, 1--177, {\it Springer, Berlin}, 1992. 

%\bibitem{GruenhageMashburn}
%G. Gruenhage and J. Mashburn.
%{\em On the decomposition of order-separable posets of 
%countable width into chains}.
%Order 16 (1999), no. 2, 171--177 (2000). 

\bibitem{HT}
D. Herbera and J. Trlifaj.
Almost free modules and Mittag-Leffler conditions. 
{\it Adv. Math}. 229 (2012), no. 6, 3436--3467. 

\bibitem{HS}
H. L. Hiller and S. Shelah.
Singular cohomology in $L$. 
{\it Israel J. Math}. 26 (1977), no. 3--4, 313--319. 

%\bibitem{Hohino:rcz}
%M. Hoshino.
%On algebras with radical cube zero. 
%{\em Arch. Math. (Basel)} 52 (1989), no. 3, 226--232.

%\bibitem{Jech: set theory}
%T. Jech.
%{\em Set theory. The third millennium edition, 
%revised and expanded}. 
%Springer Monographs in Mathematics, Springer-Verlag, 
%Berlin, 2003.

%\bibitem{Just: weak AT}
%W. Just.
%{\em A weak version of {\rm AT} from {\rm OCA}}. 
%Set theory of the continuum (Berkeley, CA, 1989), 281--291, 
%Math. Sci. Res. Inst. Publ., 26, Springer, New York, 1992. 

%\bibitem{Koszmider: side conditions}
%P. Koszmider.
%{\em Models as side conditions}. 
%Set theory (Cura?ao, 1995; Barcelona, 1996), 99--107, 
%Kluwer Acad. Publ., Dordrecht, 1998.

%\bibitem{Kunen: Set Theory}
%K. Kunen. 
%{\em Set theory. 
%An introduction to independence proofs}. 
%Studies in Logic and the Foundations of Mathematics, 
%102. North-Holland Publishing Co., 
%Amsterdam-New York, 1980. 
%
%\bibitem{Kunen: Set Theory new}
%K. Kunen. 
%{\em Set theory}. 
%Studies in Logic (London), 34. College Publications, London, 2011.

%\bibitem{Larson: Pmax OCA}
%Paul Larson, 
%{\em Showing OCA in Pmax-style extensions}. 
%Kobe J. Math. 18 (2001), no. 2, 115--126.


%\bibitem{LarsonTall: lcpn p}
%P. Larson and F. Tall.
%{\em Locally compact perfectly normal spaces may all be 
%paracompact}, 
%preprint. 

%\bibitem{LarsonTodorcevic: chain conditions}
%P. Larson and S. Todor\v cevi\'c.
%{\em Chain conditions in maximal models}. 
% Fund. Math. 168 (2001),  no. 1, 77--104. 
%
%\bibitem{LarsonTodorcevic: Katetov}
%P. Larson and S. Todor\v cevi\'c.
%{\em Kat\v etov's problem}. 
%Trans. Amer. Math. Soc. 354 (2002), no. 5, 1783--1791. 

\bibitem{Kunen:new}
K. Kunen.
Set theory. Studies in Logic (London), 34. 
{\em College Publications, London}, 2011.

%\bibitem{KunenTall: MA SH}
%K. Kunen and F. Tall.
%{\it Between Martin's axiom and Souslin's hypothesis},
%Fund. Math.
%102 (1979), no. 3, 173--181.

%\bibitem{Mahrt}
%N. Mahrt.
%Representations of the generalized Kronecker quiver 
%with countably many arrows. 
%{\em Proc. Amer. Math. Soc.} 137 (2009), no. 3, 815--824. 

%\bibitem{Mantese}
%F. Mantese. 
%Complements to projective almost complete tilting modules. 
%{\em Comm. Algebra} 33 (2005), no. 9, 2921--2940.

\bibitem{MartinSolovay}
D. Martin and R. Solovay.
Internal Cohen extensions. 
{\em Ann. Math. Logic} 2 1970 no. 2, 143--178. 

\bibitem{Minamoto}
H. Minamoto.
Ampleness of two-sided tilting complexes.
{\it Int. Math. Res. Not. IMRN} 2012, no. 1, 67--101.


%\bibitem{Moore: two OCA}
%J. T. Moore.
%{\em Open colorings, the continuum and the second uncountable cardinal}. 
%Proc. Amer. Math. Soc. 130 (2002), no. 9, 2753--2759. 


%\bibitem{Moore: weak D OCA}
%J. T. Moore.
%{\em Weak diamond and open colorings}. 
%J. Math. Log. 3 (2003), no. 1, 119--125.

%\bibitem{Moore: PFA}
%J. T. Moore.
%{\em The proper forcing axiom}. 
%Proceedings of the International Congress of Mathematicians. 
%Volume II, 3--29, Hindustan Book Agency, New Delhi, 2010.


%\bibitem{MHD: PDP}
%J. T. Moore, M. Hru\v s\'ak and M. D\v zamonja.
%{\em Parametrized $\diamondsuit$ principles}.
%Transactions of American Mathematical Society,
%356 (2004), no. 6, 2281--2306.

%\bibitem{Moore: MRP}
%J. T. Moore.
%{\em A five element basis for the uncountable linear orders}. 
%Ann. of Math. (2) 163 (2006), no. 2, 669--688.
%
%\bibitem{Moore: mho}
%J. T. Moore.
%{\em Aronszajn lines and the club filter}. 
%J. Symbolic Logic 73 (2008), no. 3, 1029--1035.
%
%\bibitem{Moore: MRP tutorial}
%J. T. Moore and D. Milovich.
%{\em A tutorial on Set Mapping Reflection}, 
%in Appalachian set theory 2006-2012, 
%J. Cummings and E. Schimmerling, eds. 
%London Math Society Lecture Notes series, 
%Cambridge University Press (2013).


%\bibitem{Roitman: S and L}
%J. Roitman.
%{\em Basic S and L}. 
%Handbook of set-theoretic topology, 295--326, 
%North-Holland, Amsterdam, 1984.

%\bibitem{Rowen}
%L. H. Rowen.
%Ring Theory. Student edition.
%{\em Academic Press, Inc., Boston, MA}, 1991.

\bibitem{Shelah:W}
S. Shelah.
Infinite abelian groups, Whitehead problem and some constructions. 
{\em Israel J. Math}. 18 (1974), 243--256. 

%\bibitem{Shelah: proper}
%S. Shelah.
%{\em Proper and improper forcing}.
%Second edition. Perspectives in Mathematical Logic. 
%Springer-Verlag, Berlin, 1998.

%\bibitem{Scheepers: gaps}
%M. Scheepers.
%{\em Gaps in $\omega^\omega$},
%In {\em Set Theory of the Reals},
%volume 6 of
%{\em Israel Mathematical Conference Proceedings},
%439--561, 1993.
%
\bibitem{SolovayTennenbaum: iteration}
R. Solovay and S. Tennenbaum.
{\em Iterated Cohen extensions and Souslin's problem}. 
Ann. of Math. (2) 94 (1971), 201--245.

%\bibitem{Steprans: ad paths}
%J. Stepr\= ans.
%{\em Almost disjoint families of paths in lattice grids}. 
%Topology Proc. 16 (1991), 185--200. 

%\bibitem{Todorcevic: note PFA}
%S. Todor\v cevi\'c.
%{\em A note on the proper forcing axiom}. 
%Axiomatic set theory (Boulder, Colo., 1983), 209--218, 
%Contemp. Math., 31, Amer. Math. Soc., Providence, RI, 1984.



%\bibitem{Todorcevic: directed set}
%S. Todor\v cevi\'c.
%{\em Directed sets and cofinal types},
%Transactions of American Mathematical Society,
%vol. 290, no. 2, pp. 711--723, 1985.
%
%
%\bibitem{Todorcevic:partitionproblems}
%S. Todor\v cevi\'c.
%{\em Partition Problems in Topology}.
%volume 84 of {\em Contemporary mathematics}.
%American Mathematical Society, Providence,
%Rhode Island, 1989.

%\bibitem{Todorcevic: gaps analytic}
%S. Todor\v cdvi\'c
%{\em Gaps in analytic quotients}. 
%Fund. Math. 156 (1998), no. 1, 85--97.


%\bibitem{Todorcevic: Basis problem}
%S. Todor\v cevi\'c.
%{\em Basis problems in combinatorial set theory}. 
%Proceedings of the International Congress of Mathematicians, 
%Vol. II (Berlin, 1998). Doc. Math. 1998, Extra Vol. II, 43--52.


%\bibitem{Todorcevic: PID}
%S. Todor\v cevi\'c.
%{\em A dichotomy for P-ideals of countable sets}. 
%Fund. Math. 166 (2000), no. 3, 251--267. 

%\bibitem{Todorcevic: ST gaps}
%S. Todor\v cevi\'c.
%{\em S-gaps and T-gaps}.
%Handwritten note, August 6, 2005.


%\bibitem{Todorcevic: Nogura}
%S. Todor\v cevi\'c.
%{\em A proof of Nogura's conjecture}. 
%Proc. Amer. Math. Soc. 131 (2003), no. 12, 3919--3923.

%\bibitem{Todorcevic: combinatorial}
%S. Todor\v cevi\'c.
%{\em Combinatorial dichotomies in set theory}, 
%Bull. Symbolic Logic 17 (2011), no. 1, 1--72.

%\bibitem{Todorcevic: PFA(S)[S]}
%S. Todor\v cevi\'c.
%{\em Forcing with a coherent Suslin tree}, 
%preprint. 


%\bibitem{TodorcevicFarah: applications}
%S. Todor\v cevi\'c and I. Farah.
%{\em Some applications of the method of forcing.}
%Yenisei Series in Pure and Applied Mathematics. 
%Yenisei, Moscow; Lyc\'ee, Troitsk, 1995.
%
%\bibitem{TodorcevicVelickovic: MA}
%S. Todor\v cevi\'c and B. Veli\v ckovi\'c.
%{\em Martin's axiom and partitions}. 
%Compositio Math. 63 (1987), no. 3, 391--408.

\bibitem{Tr ES}
J. Trlifaj.
Non-perfect rings and a theorem of Eklof and Shelah. 
{\it Comment. Math. Univ. Carolin}. 32 (1991), no. 1, 27--32. 

%\bibitem{Velickovic: applications OCA}
%B. Velickovic.
%{\em Applications of the open coloring axiom}. 
%Set theory of the continuum (Berkeley, CA, 1989), 137--154, 
%Math. Sci. Res. Inst. Publ., 26, Springer, New York, 1992.

%\bibitem{Velickovic: autom}
%B. Veli\v ckovi\'c.
%{\em ${\rm OCA}$ and automorphisms of
%$\cP(\omega)/{\rm fin}$}, 
%Topology and its Applications,
%vol. 49, no. 1, pp.1-13, 1993.

%\bibitem{Xu:ARC1}
%D. Xu.
%A note on the Auslander-Reiten conjecture. 
%{\em Acta Math. Sin. (Engl. Ser.)} 29 (2013), no. 10, 1993--1996.

%\bibitem{Xu:ARC2}
%D. Xu.
%Auslander-Reiten conjecture and special biserial algebras. 
%{\em Arch. Math. (Basel)} 105 (2015), no. 1, 13--22.

%\bibitem{Yorioka: D(non(M))=>dgap}
%T. Yorioka.
%{\it The diamond principle for the uniformity 
%of the meager ideal implies the existence 
%of a destructible gap}.
%Arch. Math. Logic 44 (2005), 677--683.
%
%\bibitem{Yorioka: cp ci dgap}
%T. Yorioka
%{\em Combinatorial principles on $\omega_1$, 
%cardinal invariants of the meager ideal and destructible gaps}. 
%J. Math. Soc. Japan 57 (2005), no. 4, 1217--1228.
%
%\bibitem{Yorioka: rec}
%T. Yorioka. 
%{\em Some weak fragments of Martin's axiom 
%related to the rectangle refining property}. 
%Arch. Math. Logic 47 (2008), no. 1, 79--90. 
%
%\bibitem{Yorioka:R1aleph1}
%T. Yorioka.
%{\em A non-implication between fragments of Martin's Axiom 
%related to a property which comes from Aronszajn trees}, 
%Ann. Pure Appl. Logic, 
%161 (2010), no. 4, 469--487. 
%
%\bibitem{Yorioka:R1aleph1correction}
%T. Yorioka.
%{\em A correction to 
%``A non-implication between fragments of Martin's Axiom 
%related to a property which comes from Aronszajn trees''}, 
%Ann. Pure Appl. Logic 162 (2011), no. 9, 752--754.

%\bibitem{Yorioka:MArec}
%T. Yorioka.
%Uniformizing ladder system colorings and the rectangle refining property
%{\em Proc. Amer. Math. Soc}. 138 (2010), no. 8, 2961--2971.


%
%
%\bibitem{Yorioka:AMMA}
%T. Yorioka.
%{\em Some consequences from Proper Forcing Axiom together with large continuum and the negation of Martin's Axiom}, Journal of the Mathematical Society of Japan, to appear. 
%
%
%\bibitem{Yorioka:nonspecialAtree}
%T. Yorioka.
%{\em The existence of a non-special Aronszajn tree and 
%some consequences of the Proper Forcing Axiom }, 
%submitted.  

%\bibitem{Zapletal: keeping}
%J. Zapletal. 
%{\em Keeping additivity of the null ideal small}. 
%Proc. Amer. Math. Soc. 125 (1997), no. 8, 2443--2451.


\end{thebibliography}
\end{document}